\newcommand{\RR}{\mathbb{R}}
\newcommand{\dd}{\, {\rm d}}
\newcommand{\NN}{\mathbb{N}}
\def\qset{\mathcal Q}
\def\states{\mathcal X}
\def\cset{\mathcal M}
\def\allgambles{\RR^m}
\def\lpr{\underline P}
\def\upr{\overline P}
\def\gambleset{\mathcal F}
\def\metherr{E^m}
\newcommand{\low}[1]{{\underline{#1}}}
\newcommand{\up}[1]{{\overline{#1}}}
\begin{document}
\title{Computing bounds for imprecise continuous-time Markov chains using normal cones}
\titlerunning{Computing bounds for CTIMC}
%
\author{Damjan \v{S}kulj\inst{1}\orcidID{0000-0002-6177-585X} }
\authorrunning{D. \v{S}kulj}
%
\institute{ University of Ljubljana, Faculty of Social Sciences \\ Kardeljeva pl. 5, SI-1000 Ljubljana, Slovenia\\
\email{damjan.skulj@fdv.uni-lj.si}}
\maketitle              
\begin{abstract}
	The theory of imprecise Markov chains has achieved significant progress in recent years. 
	Its applicability, however, is still very much limited, due in large part to the lack of efficient computational methods for calculating higher-dimensional models. 
	The high computational complexity shows itself especially in the calculation of the imprecise version of the Kolmogorov backward equation. 
	The equation is represented at every point of an interval in the form of a minimization problem, solvable merely with linear programming techniques. 
	Consequently, finding an exact solution on an entire interval is infeasible, whence approximation approaches have been developed. 
	To achieve sufficient accuracy, in general, the linear programming optimization methods need to be used in a large number of time points. 
	
	The principal goal of this paper is to provide a new, more efficient approach for solving the imprecise Kolmogorov backward equation.
	It is based on the Lipschitz continuity of the solutions of the equation with respect to time, causing the linear programming problems appearing in proximate points of the time interval to have similar optimal solutions. 
	This property is exploited by utilizing the theory of normal cones of convex sets. 
	The present article is primarily devoted to providing the theoretical basis for the novel technique, yet, the initial testing shows that in most cases it decisively outperforms the existing methods.

	\keywords{Imprecise Markov chain in continuous-time  \and Imprecise transition operator \and Normal cone.}
\end{abstract}
	\section{Introduction}
	The theory of imprecise Markov chains in continuous-time has achieved significant progress in recent years \cite{de2017limit, erreygers2017imprecise, erreygers2018computing, skulj:15AMC}, following the success of imprecise Markov chains in discrete time \cite{decooman-2008-a, skulj:09ISIPTA}. They successfully combine the theory of stochastic processes with the ideas of imprecise probabilities \cite{augustin2014introduction, walley:91}. The theory has been employed in the analysis of optical networks \cite{erreygers2018imprecise, rottondi2017modelling}, electric grid \cite{troffaes2015using, troffaes2019two}, and information propagation \cite{liu2020double}. 
	
	The applicability of the theory is still limited to cases with moderate number of states, mainly because of the computational complexity. The core of the computations with imprecise (and precise) continuous-time Markov chains is the evaluation of the  Kolmogorov backward equation. It is a matrix differential equation, which in the imprecise case involves lower transition operators instead of fixed matrices that are used in the precise theory. Consequently, the closed-form expressions known from the precise case are unfeasible for the imprecise model. As an alternative, numerically intensive grid methods have been developed \cite{krak2017imprecise, erreygers2017imprecise}. Those divide the interval of interest into a large number of subintervals where an optimization problem is solved using linear programming techniques. An additional difficulty is that the problem, in general, could hardly be tackled with parallel computation, as the outputs from earlier parts of the interval serve as the inputs for those coming later. 
	
	An alternative approach has already been presented in \cite{skulj:15AMC}, with a hybrid method. The method combines the matrix exponential approach, known from the precise case, and grid techniques, in the situation where the matrix exponential approach is infeasible. This proposal seems to have been overlooked in later papers on the topic \cite{erreygers2017imprecise, krak2017imprecise}, which only focus on the improvements of the grid technique. The reason may be that the hybrid method originally presented is not fully optimized for practical use. 
	
	The goal of the present article is to fill this gap and provide a computationally efficient algorithm based on the idea proposed in \cite{skulj:15AMC}. To make the method more suitable for practical use, we combine it with the theory of normal cones of convex sets. It allows substituting several steps that were initially based on linear programming with computationally simpler matrix operations. The primary result proposed is a computationally efficient procedure for solving the imprecise version of the Kolmogorov backward equation. It proceeds by identifying intervals where a solution using a suitable matrix exponential produces sufficiently accurate approximations within given error bounds. In most cases, the intervals allowed by our approach are considerably larger than those required by the existing grid methods. Merely in the worst cases, which are borderline situations typically only restricted to smaller parts of the domain, the interval widths are of about similar sizes. 	
	The identification of the intervals where the matrix exponential method is feasible does bring some additional computational costs to each step. Nevertheless, these computations are in the form of matrix operations and therefore much faster by than the linear programming optimization, which is in general inevitable at each step and still contributes the majority of the computational costs.  	
	
	We illustrate our method by two examples. 
	In our first example, the solution that would require more than a thousand steps with the grid methods, completes in only three steps with our approach. 
	In the second example we formally confirm, in a reasonable number of steps, the validity of a solution from a previous study, where the existing methods were reported as infeasible. 
	The intention of this paper, however, is to provide the theoretical basis for the method and leave the practical considerations to further research. 
	This also includes comparison with the existing methods, as not much practical testing has been reported in literature up to now.

	Our paper is structured as follows. In Section~\ref{s-imcct} we provide an overview of the theory of imprecise Markov chains in continuous-time. In Section~\ref{s-nmfle} essential methods are presented for calculating lower expectations with respect to imprecise probabilistic models. The convexity properties of imprecise transition rate operators and their normal cones are presented in Section~\ref{s-nciq}, and in Section~\ref{s-nqm} the norms and seminorms used throughout the paper are provided. In Section~\ref{s-nmbc} the numerical approximation techniques are discussed and the novel approach is proposed in detail. All mentioned methods are analyzed from the point of view of errors they produce in Section~\ref{s-ee}. Finally, in Section~\ref{s-ae} the proposed methods are merged into a working algorithm and demonstrated on two examples. 
	
	\section{Imprecise Markov chains in continuous-time}\label{s-imcct}
	\subsection{Imprecise distributions over states}
	An imprecise Markov chain in continuous-time is a stochastic process with a state space $\states$, whose elements will be denoted by $k \in \states$ and its cardinality $|\states|$ by $m$. The states will simply be labelled by consecutive numbers $1, 2, \ldots, m$. Labels however will not have any meaning for the dynamics of the process. The process will be indexed by time $t\in [0, \infty)$. At every time point $t$, the state the process assumes is denoted by $X_t$, which is thus a random variable on $\states$. As we will only consider the finite state case, the measurability considerations will be trivial. The distribution of $X_t$ is assumed to be imprecisely specified, and is therefore represented by an imprecise probabilistic model. The usual choice of the model in the theory of imprecise probabilities are \emph{credal sets} and derived models of \textit{coherent lower} and \textit{upper previsions}. 
	
	Credal sets are closed convex sets of probability distributions or expectation functionals -- depending on how they are presented. A credal set can be represented by listing the extreme points or via constraints in terms of linear inequalities. Because of their large number, growing rapidly with increasing time $t$, listing the extreme points is impractical. Instead, the alternative approach utilizing constraints in terms of linear inequalities proves more efficient. 
	
	In the terminology commonly used in the theory of imprecise probabilities, the constraints are known as coherent lower and upper previsions. They are defined on sets of \emph{gambles}, where a gamble is a common term which in the theory of imprecise probabilities denotes an uncertain reward on $\states$. Technically, a gamble is a real valued map $f\colon \states \to \RR$, which in general is required to be bounded and measurable with respect to some algebra $\mathcal A\subseteq 2^\states$. For finite spaces $\states$ boundedness and measurability are automatically satisfied, which allows identifying the set of all gambles $\mathcal L(\states)$ with the linear space of all real $|\states|$-tuples, or as usually denoted, $m$-tuples. Thus, we will identify $\mathcal L(\states)$ with $\RR^m$. Given a gamble $f\in\allgambles$, $f_i$ will denote its $i$-th component, or $f_i = f(i)$ for every $i \in \{ 1, \ldots,  m\}$. For a subset $A\subseteq \states$ we denote with $1_A$ its \textit{indicator gamble}
	\begin{equation*}
		1_A(i) = \begin{cases}
			1 & i\in A, \\
			0 & i\not\in A.
		\end{cases}
	\end{equation*}
	Particularly, $\lambda 1_\states$, for some $\lambda\in \RR$, is just a constant gamble on $\states$ mapping each $i\in \states$ to $\lambda$.
	
	To explain very briefly, given a set $\gambleset$ of gambles, lower and upper previsions denote a pair of mappings $\lpr, \upr \colon \gambleset \to \RR$ such that $\lpr (f) \leqslant \upr (f)$, which may serve as constraints to forming a credal set of the form
	\begin{equation}
		\cset = \{ P \colon \lpr(f) \leqslant P(f) \leqslant \upr (f) \},
	\end{equation}
	where $P$ stands for \textit{linear expectation functionals} or \emph{linear previsions}. The distinction between both notions is only meaningful in the case of infinite state spaces. Instead of a pair of lower and upper previsions, it is more common to only specify either of them. More on the representation and results on the correspondence between credal sets and lower and upper previsions a keen reader is kindly referred to general literature on imprecise probabilities, such as \cite{augustin2014introduction, miranda:07, walley:91}. The aspects needed for our case will be detailed in the sequel of this manuscript.  
	
	 Credal sets give rise to \emph{lower} and \emph{upper expectation functionals} on the space $\allgambles$ of all gambles on $\states$. Given a gamble $f\in \RR^m$, we define its lower and upper expectation with respect to a credal set $\cset$ as
	 \begin{equation}\label{eq-lower-exp}
	 	\low E (f) = \inf_{P\in \cset} P(f) = \inf_{P\in \cset} \sum_{k\in\states} P(1_{\{k\}}) f(k)
	 \end{equation}
	 and 
	 \begin{equation}\label{eq-upper-exp}
	 	\up E (f) = \sup_{P\in \cset} P(f) = \sup_{P\in \cset} \sum_{k\in\states} P(1_{\{k\}}) f(k)
	 \end{equation}
	 respectively. 
	 If $\cset$ is a credal set corresponding to some lower or upper prevision or a combination of both, then the lower and upper expectations obtained in this way are said to be the \emph{natural extension} of the original assessments. This is because, the assessments on $\gambleset\subset \RR^m$ are extended to the entire space allowing for the maximal set of compatible probability models.
	 
	 The basic properties of lower and upper expectation functionals imply the conjugacy relation $\up E(f) = -\low E(-f)$, meaning that every upper expectation can be deduced from a lower expectation and vice versa, rendering the models equivalent. Thus, only one of the definitions \eqref{eq-lower-exp} and \eqref{eq-upper-exp} is sufficient to completely describe an imprecise probability model. Indeed, in the literature on imprecise stochastic models, both models have been utilized, depending on their convenience in particular cases. Particular notions and formulas can benefit from one or another convention, yet they can very easily be reworked into the conjugate terms. In this paper we follow the approach utilized in the recent papers \cite{de2017limit, erreygers2017imprecise, erreygers2018computing, DECOOMAN201618, krak2017imprecise}, that use lower expectations as the basic model. This is in contrast with some prior papers on stochastic processes in discrete time \cite{decooman-2008-a,skulj:09IJAR,skulj:hable:13MET,Crossman:2010,skulj:13LAA}, where upper expectations were used, which was also the case in our initial approach \cite{skulj:15AMC}. 
	 The essential benefit of using lower expectations compared to extreme points of credal sets is that imprecise probability models manifested in sets of probability models are represented by more tractable real-valued maps. 
	 
	 Adding the time dimension, our analysis now translates into finding the lower expectations $\low E_t(f)$ for a given gamble $f$ with respect to the corresponding credal sets $\cset_t$ at given time $t$. This results in a real valued map $t\mapsto \low E_t(f)$ on a required time interval. Typically it is of the form $[0, T]$, where $0$ denotes the initial time of the process observation. The value of $\low E_t(h)$ depends on the initial distribution, represented by an initial lower expectation $\low E_0$, and the \emph{transition law}, which is described in terms of imprecise transition rates, as described in the following section. 
	
	\subsection{Imprecise transition rate matrices}\label{ss-itrm}
	A continuous-time Markov process switches between states in $\states$ randomly according to some \emph{transition rates}, which are described using \emph{$Q$-matrices}, also named \emph{transition rate matrices}. Each element $Q_{kl}$, for $k\neq l$, of a transition rate matrix denotes the rate at which a process in state $k$ moves to state $l$. Its value is non-negative. The diagonal elements $Q_{kk}$ are negative and denote the rate of leaving $k$. It follows that $Q_{kk}=-\sum_{l\neq k}Q_{kl}$, which implies that the sum of all rows of a $Q$-matrix equals 0. 
	
	If the process is governed by a precise $Q$-matrix, i.e. with constant transition rates, the expectations corresponding to $X_t$ are calculated as $E_t (f) = E_0 (e^{tQ}f)$ for a gamble $f\in \RR^m$ (see e.g. \cite{de2017limit, skulj:15AMC}). This formula, however, does not allow direct generalization to the imprecise case. Therefore, we rather turn to its differential version, where another modification is carried out. That is, we shift the focus from the calculation of the transformed probability distributions to calculating the transformed gambles as functions of time. This becomes more apparent after denoting the transition operator $T_t = e^{tQ}$ acting on the set of gambles. We have that $E_t (f) = E_0 (T_t f)$. The calculation of $E_t(f)$ consequently translates to the calculation of the expectation of $T_tf$ with respect to the initial model $E_0$. The transition operator $T_t$ satisfies the Kolmogorov backward equation 
	\begin{equation}\label{eq-kbe}
		\frac{\dd}{\dd t}T_t f = QT_t f
	\end{equation}
	for every gamble $f$. This differential equation does allow involving imprecision via replacing a precise transition rate matrix $Q$ with an imprecise generalization as introduced below. 
	
	The imprecision in transition rates is modelled by replacing precisely given transition rate matrices with sets of those, called \emph{imprecise transition rate matrices} or \emph{imprecise $Q$-matrices}. These sets are assumed to contain the factual transitions governing the dynamics of the system at any time $t$, and are typically denoted by $\qset$. Thus at every time we merely assume that transition rates belong to the set $\qset$, while in the course of time they may arbitrarily vary within it. We additionally require the imprecise $Q$-matrices to be closed, convex and bounded, i.e. there exists a constant $M$ such that $| Q_{kl}| \leqslant M$ for every $Q\in\qset$ and $k, l \in \{ 1, \ldots , m\}$. 
	
	Let $\mathcal Q$ be an imprecise $Q$-matrix. Fixing a row index $k$, let $\mathcal Q_k\colon \RR^m \to \RR$ be the set of functionals defined by $Q_k(f) = [Qf]_k$ for every $Q\in \mathcal Q$ and $f\in \allgambles$. We say that $\mathcal Q$ has \textit{separately specified rows} if for every collection of $Q_k\in \mathcal Q_k$, for $k\in \{ 1, \ldots, m\}$, there exists a matrix $Q \in \mathcal Q$ whose $k$-th row is $Q_k$: $[Q(f)]_k = Q_k(f)$. In other words, a set of matrices $\mathcal Q$ has separately specified rows if $\mathcal Q = \times_{k\in \states} \mathcal Q_k$. From now on, the separately specified rows property will be added to the list of standard requirements for an imprecise $Q$-matrix.
	
	For an imprecise $Q$-matrix, the corresponding \emph{lower transition operator} is defined by
	\begin{equation}\label{eq-lto}
		\low Q f := \min_{Q\in \mathcal Q} Q f,
	\end{equation}
	where the $\min$ is meant componentwise. However, the separately specified rows property ensures that for every $f\in \allgambles$, some $Q_f\in\mathcal Q$ exists such that $Q_f f = \low Q f$. Thus, the above componentwise minimum is actually attained by some product $Q_f f$. 
	
	Below we list some fundamental properties of lower transition rate operators. Let gambles $f, g\in \allgambles$, the constant gamble $\mu 1_\states, \lambda \geqslant 0$ and a row index $k$ be given. The following properties hold:
	\begin{enumerate}[(i)]
		\item $\low Q (\mu 1_\states) = \mathbf 0$;
		\item $[\low Q 1_{\{l\}}]_k \geqslant 0$ for all $l\in \states$ such that $l\neq k$;
		\item $\low Q (f+g) \geqslant \low Q f + \low Q g$;
		\item $\low Q (\lambda f) = \lambda \low Q f$.
	\end{enumerate}
	In the above relations and elsewhere, the inequality relations such as $f\leqslant g$ between vectors are meant to denote $f(k)\leqslant g(k)$ for every $k\in \states$. 
	
	The converse the above is also true, i.e. that for every operator $\low Q$ satisfying the above properties (i)--(iv), an imprecise $Q$-matrix $\qset$ exists such that
	\begin{equation}
		\mathcal Q = \{ Q\colon Q \mu 1_\states = \mathbf 0, Qf \geqslant \low Q f \text{ for every } f\in \allgambles \}.
	\end{equation}
	The proof of the above one-to-one correspondence can be found in \cite{krak2017imprecise}.

	\subsection{Distributions at time $t$}	
	Consider again the Kolmogorov backward equation \eqref{eq-kbe} and its relation with the expectation functional $E_t = E_0T_t$, which uniquely characterizes the distribution at time $t$ for the precise case. Transferring the equation to the imprecise case amounts to replacing $E_t$ with its imprecise version $\low E_t$, which is obtained as the product of the imprecise versions of $E_0$ and $T_t$. The imprecise initial distribution is modelled by the lower expectation $\low E_0$. The transition law in the imprecise case will be modelled by the \emph{lower transition operator} $\low T_t$. Lower (and upper) transition operators and their properties are in fact well-known from the discrete time theory, which has been successfully transferred to the imprecise case a while ago (see e.g. \cite{decooman-2008-a, skulj:09IJAR}). 
	
	The imprecise distribution of $X_t$ represented by the lower expectation functional $\low E_t$ now satisfies the following relation \cite{skulj:15AMC}: 
	\begin{equation}
		\low E_t (f) = \low E_0 (\low T_t f),
	\end{equation}
	for every gamble $f$. The lower transition operator $\low T_t$ is obtained as the unique non-linear operator satisfying 
	\begin{equation}\label{eq-lto}
		\frac{\dd}{\dd t}\,\low T_t f = \low Q\, \low T_t f 
	\end{equation}
	and the initial condition $T_0f = f$ for every gamble $f$. Actually, De Bock~\cite{de2017limit} showed the above equation holds even without reference to a specific gamble $f$. Yet, finding a specific lower expectation is merely possible for a given $f$ in which case both interpretations of the equation coincide.
	
	To calculate $\low E_t(f)$ for a specific vector $f$, the lower operator $\low T_t$ does not need to be completely specified. instead only the vector function $f_t := \low T_t f$ needs to be evaluated. By \eqref{eq-lto}, it follows that 
	\begin{equation}\label{eq-BSDE}
		\frac{\dd}{\dd t} f_t = \low Q f_t, 
	\end{equation}
	with the initial condition $f_0 = f$. 
	It was shown in \cite{skulj:15AMC} that this equation has a unique solution for a lower transition rate operator satisfying (i)--(iv) from section \ref{ss-itrm}. 
	
	Unfortunately, no analytical formula similar to the matrix exponential solving the precise version has been discovered in general imprecise case. (For the case of $m=2$, an explicit formula has been found in \cite{erreygers2017imprecise}). This leaves us depending on more or less efficient numerical methods. The goal of the remainder of the paper is proposing an efficient numerical method based on the theory of normal cones. 
	
	\section{Numerical methods for finding lower expectations}\label{s-nmfle}
	
	\subsection{Lower expectation and transition operators as linear programming problems}
	The methods for finding lower expectations of the random variables $X_t$ are based on linear programming methods. As explained in the previous section, coherent lower (or upper) previsions are often presented in the form of a finite number of assessments, which can be turned into constraints of linear programming problems. Something similar can be said for imprecise transition rates, which as convex sets can also be generated by imposing a finite number of linear constraints. The corresponding objective function is usually deduced from the minimizing gamble. 
	
	Specifically, consider Equation~\eqref{eq-BSDE}. The calculation of the lower transition rate $\low Qf_t$ for a given $f_t$ is an optimization problem, where the minimum 
	\begin{equation}
		\min_{Q_k\in \mathcal Q_k} Q_k(f) 
	\end{equation}
	has to be obtained for every component $k\in \{ 1, \ldots, m\}$. If the set $\mathcal Q_k$ is represented by a finite number of constraints, the above optimization problem can be solved by linear programming techniques. Once the solution $h_k = \min_{Q_k\in \mathcal Q_k} Q_k(f)$ is obtained for every $k$, the solutions are combined into the solution vector $h$, whose components are $h_k$, and the minimizing matrix $Q$, whose rows are exactly the minimizing solutions $Q_k$. 
	
	\subsection{Finitely generated sets of transition rate matrices}
	It is common in the theory of imprecise probabilities that judgements are given for a certain class of gambles, such as indicator gambles $1_A$. We can thus, for instance, say that the transition rate from a state $k$ to a set $A$ is at least $2$. Then we write $Q_k(1_A) \geqslant 2$. Typically, we might have a finite set $\gambleset$ of gambles together with a set of judgements $Q_k(f)\geqslant \low Q_k(f)$, where $\low Q_k(f)$ are prescribed lower transition rates. To make the linear programming approach applicable, judgements about transition rates have to be supplemented by the general conditions for $Q$-matrices. 
	
	Thus, we assume that the judgements about the transition rates are given in the form $\low Q_k (f)$ for every $f\in \gambleset$. It would be possible of course that the sets $\gambleset$ would depend on $k$ as well, but for convenience we will stick with the shared set of gambles. The methods for the more general case, however, would be directly derived from the methods presented here. 
	
	Now an imprecise $Q$-matrix $\qset$ can be formed as 
	\begin{multline}\label{eq-imprecise-matrix}
		\mathcal Q = \{ Q\colon Q_k 1_\states = 0, Q_k(f) \geqslant \low Q_k(f)~ \forall k\in \states~\forall f\in \gambleset, \\ Q_k (1_{\{l\}}) \geqslant 0 ~\forall l\neq k ~\forall k \in \states \}.
	\end{multline}
	Our general assumption is that the imprecise $Q$-matrix has separately specified rows. This property is clearly satisfied if the constraints on $Q_k(f)$ are independent from those on $Q_l(f')$ for $k\neq l$. Imposing a constraint, for instance, in the form $Q_k (f) = Q_l (f)$ would restrict the choice of matrix rows in the set $\mathcal Q_l$ once $Q_k$ is selected.
	
	\subsection{Computational approaches to estimating lower expectation functionals} 
	The most common computation involving imprecise continuous-time Markov chains is solving of the Equation~\eqref{eq-BSDE} for a given gamble $f$ on a finite time interval $[0, T]$. The calculation of $\low Qf_t$ is typically implemented as a linear programming problem. In principle it would have to be solved for every single time point of an interval, and this is clearly impossible. Hence, the exact solution is in most cases unattainable, whence we have to satisfy with approximations. 
	
	Most of the computational approaches to finding approximate solutions proposed in literature apply some kind of discretization of the interval $[0, T]$. This means constructing a sequence $0=t_0 < t_1 < \dots < t_n = T$. By the semigroup property of the lower transition operators, we then have that $\low T_T = \prod_{i=1}^{n} \low T_{t_i - t_{i-1}}$. The idea is now to take the differences $\delta t = t_i-t_{i-1}$ sufficiently small, so that approximations of the form $\hat{\low T}_{\delta t} = (I+\delta t Q)$ or $\hat{\low T}_{\delta t} = e^{\delta t Q}$, for some matrix $Q$, minimizing $\low Qf_{t_{i-1}}$, are accurate enough even when the approximation errors compound. It has been shown in \cite{skulj:15AMC,krak2017imprecise,erreygers2017imprecise} that it is possible, with appropriately fine grids, to achieve arbitrarily accurate approximations.   	
	The approximate solution $\hat f_T$ of $\low T_Tf$ is then obtained by initially setting $f_0 = f$ and then sequentially calculating the approximations $\hat f_{t_i} = \hat{\low T}_{t_i - t_{i-1}} \hat f_{t_{i-1}}$, resulting ultimately in $\hat f_T = \hat f_{t_n}$. The present methods differ in the way the step sizes $t_i-t_{i-1}$ are determined and how the approximate transition operators $\low T_{t_i - t_{i-1}}$ are obtained. 
	
	Our goal is to achieve a progress in the applicability of the approach presented in \cite{skulj:15AMC}, called \emph{the approximation with adaptive grid method}. To explain the underlying idea, note that the optimization problems for finding the minima $\low Q f_t$ for different $t$ are all the same as far as constraints are considered, and they merely differ in the objective functions, which correspond to $f_t$, which is a Lipschitz continuous function of $t$ (cf. Proposition~7 in \cite{skulj:15AMC}). Therefore, it is legitimate to expect that the matrices $Q$, minimizing expression $Q f_t$, would lie in a close neighbourhood, or even be the same, for proximate values of $t$. This idea is unique to our approach, as the majority of other methods in existence do not attempt to make use of the continuity of solutions $f_t$. 
	
	By our method, the intervals $t_i-t_{i-1}$ are chosen in the way that the corresponding transition operators $\low T_{t_i - t_{i-1}}$ can be approximated by $e^{(t_i - t_{i-1})Q}$, where $Q$ is a transition rate matrix. Very often, this choice even produces the exact solution on a suitable interval, i.e. no error additional to the initial error of $\hat f_{t_{i-1}}$ is produced. Moreover, utilizing this method, the intervals $t_i-t_{i-1}$ are typically allowed to be considerably wider than with using the alternative techniques. 
	
	Another adaptive grid method has also been proposed in \cite{erreygers2017imprecise}, which uses intervals of varying lengths, yet the choice of the length is not based on the same assumption. Instead, they allow intervals to become wider based on the convergence of solutions in a suitable norm. 
	
	In the previous paper \cite{skulj:15AMC}, the implementation of the adaptive grid method was introduced, yet it lacks a fast implementation. In this paper we improve the approach presented there in two directions. First we provide a much more efficient way of analyzing the maximal possible error, which effectively answers whether the approach is feasible on the given interval. The second improvement is the approximate version of the method, which can always be applied if only the intervals are made small enough. The error of the approximate version is in the worst case merely comparable with the ordinary grid methods, while in most cases being significantly smaller. Both improvements arise from the new foundations based on the theory of normal cones of convex sets.

	\section{Normal cones of imprecise Q-operators}\label{s-nciq}
	A closed and convex set of transition matrices generated in the form of \eqref{eq-imprecise-matrix} is a convex polyhedron if the set of constraints is finite and it is non-empty and bounded. Moreover, if it additionally satisfies the separately specified rows property, it can be represented as a product of the row polyhedra $\mathcal Q = \times_{k=1}^m \mathcal Q_k$. 
	
	\subsection{Normal cones of convex sets}	
	We start our introduction to normal cones with general vector spaces.
	Let $V$ be a finite dimensional vector space equipped with the standard inner product. A \emph{convex polyhedron} in $V$ is a bounded convex set $\mathcal C$ with finitely many extreme points. Equivalently, a convex polyhedron can be represented as an intersection of a finite number of half spaces of the form $\{ x\in V \colon x f \geqslant b_f \}$, where $f\in V$ is a given vector, $b_f$ is a constant and $x f$ denotes the standard inner product of $x$ and $f$. Thus, we can write 
	\begin{equation}\label{eq-convex-polyhedron}
		\mathcal C = \{ x\in V \colon x f \geqslant b_f \text{ for all } f \in \gambleset \},
	\end{equation}
	where $\gambleset$ is a given finite collection of vectors. Some of the inequalities $x f\geqslant b_f$ may in fact be equalities, such as in the representation of the imprecise $Q$-matrix, where $q1_\states = 0$ is required. This case, however, can be unified with the general case by replacing an equality condition $x f = b_f$ with two inequalities, $x f \geqslant b_f$ and $x (-f) \geqslant -b_f$. 
	
	Now take some point $x\in \mathcal C$ and define its \emph{normal cone} to be the set 
	\begin{equation}
		N_{\mathcal C} (x) = \{ f\in V \colon x f \leqslant y f \text{ for every } y\in \mathcal C \}.
	\end{equation}
	That is, the normal cone of $x$ is the set of all vectors $f$ for which $x = \arg\min_{y\in\mathcal C}y f$. Most often the minimum of the above expression is recognized as a linear programming problem where $\mathcal C$ is the feasible set. Thus the normal cone of $x$ can be understood as the set of all vectors $f$ such that the objective function $y f$ has an optimal solution in $x$. It is well-known that only points in the boundary minimize objective functions, and therefore only normal cones for those sets are non-empty. Moreover, every objective function is minimized in at least one extreme point. This implies that the union of the normal cones of extreme points is the entire space $V$. 
	
	The following proposition holds (see \cite{gruber:07CDG}, Proposition 14.1).
	\begin{proposition}\label{prop-cone-positive-hull}
		Let $\mathcal C$ be a convex polyhedron represented in the form \eqref{eq-convex-polyhedron} and $x\in \mathcal C$ a boundary point. Let $\gambleset_x = \{ f\in \gambleset\colon x f = b_f \}$. Then 
		\begin{equation}
			N_{\mathcal C} (x) = \mathrm{posi}\, \gambleset_x. 
		\end{equation}
		(The notation $\mathrm{posi}\, \gambleset$ denotes the cone of all non-negative linear combinations of elements in $\gambleset$.)
		
		Moreover, if $x$ is an extreme point of $\mathcal C$, then $\dim N_{\mathcal C} (x) = \dim V$. 
	\end{proposition}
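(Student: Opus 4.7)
The plan is to prove the set equality $N_{\mathcal C}(x) = \mathrm{posi}\,\gambleset_x$ by two inclusions, and then deduce the dimension statement from extremality.

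The inclusion $\mathrm{posi}\,\gambleset_x \subseteq N_{\mathcal C}(x)$ is the easy direction. I take $f = \sum_i \lambda_i f_i$ with $\lambda_i \geqslant 0$ and $f_i\in \gambleset_x$, so $x f_i = b_{f_i}$. For any $y\in \mathcal C$, the defining constraints give $y f_i \geqslant b_{f_i} = x f_i$, and taking non-negative combinations yields $y f \geqslant x f$, hence $f\in N_{\mathcal C}(x)$.

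The reverse inclusion $N_{\mathcal C}(x)\subseteq \mathrm{posi}\,\gambleset_x$ is the main obstacle and requires a separation argument; this is where the polyhedral structure is used essentially. I plan to argue by contradiction. Suppose $f\in N_{\mathcal C}(x)$ but $f\notin \mathrm{posi}\,\gambleset_x$. Because $\gambleset_x$ is finite, the cone $\mathrm{posi}\,\gambleset_x$ is a finitely generated (hence closed) convex cone, so by the standard separating hyperplane theorem for a closed convex cone and an exterior point there exists a vector $v\in V$ such that $v f < 0$ while $v g \geqslant 0$ for every $g\in \gambleset_x$. I then consider the perturbed point $y_\varepsilon = x + \varepsilon v$ and check the two classes of constraints: for $g\in \gambleset_x$, $y_\varepsilon g = b_g + \varepsilon v g \geqslant b_g$; for $g\in \gambleset\setminus \gambleset_x$, the inequality $x g > b_g$ is strict, so by boundedness of $v g$ it remains strict for all sufficiently small $\varepsilon > 0$. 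Thus $y_\varepsilon \in \mathcal C$, yet $y_\varepsilon f = x f + \varepsilon v f < x f$, contradicting $f\in N_{\mathcal C}(x)$.

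For the dimension claim, I use that $x$ is an extreme point to show $\gambleset_x$ linearly spans $V$. If not, there exists $v\neq 0$ with $v g = 0$ for every $g\in \gambleset_x$, and the same perturbation argument (applied to both $x + \varepsilon v$ and $x - \varepsilon v$) shows that both points lie in $\mathcal C$ for small $\varepsilon > 0$; then $x = \tfrac12(x+\varepsilon v) + \tfrac12(x-\varepsilon v)$ contradicts extremality. Therefore $\mathrm{span}\,\gambleset_x = V$, and since the linear span of a convex cone coincides with the linear span of any generating set, $N_{\mathcal C}(x) = \mathrm{posi}\,\gambleset_x$ has the same linear span, giving $\dim N_{\mathcal C}(x) = \dim V$.
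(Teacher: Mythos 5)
Your proof is correct. Note, however, that the paper offers no proof of its own for this proposition: it is stated with a citation to Gruber's \emph{Convex and Discrete Geometry} (Proposition 14.1), so there is nothing internal to compare against. What you have written is the standard self-contained argument, and each step checks out. The easy inclusion $\mathrm{posi}\,\gambleset_x \subseteq N_{\mathcal C}(x)$ is immediate from the active constraints. For the reverse inclusion, your appeal to separation is really Farkas' lemma in disguise: since $\gambleset_x$ is finite, $\mathrm{posi}\,\gambleset_x$ is finitely generated and hence closed (this closedness is the one fact you are implicitly importing, and you correctly flag it), so a point outside it admits a separator $v$ with $vf<0$ and $vg\geqslant 0$ on $\gambleset_x$; the perturbation $x+\varepsilon v$ then stays feasible for small $\varepsilon$ because the inactive constraints are finitely many strict inequalities, and it strictly decreases $y\mapsto yf$, contradicting $f\in N_{\mathcal C}(x)$. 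Your argument also handles the equality constraints that the paper encodes as paired inequalities (the separator is then orthogonal to those $f$, so the perturbation preserves the equalities). The dimension claim is likewise handled correctly: non-spanning of $\gambleset_x$ yields a two-sided feasible perturbation contradicting extremality, and for a cone containing the origin the affine hull equals the linear span of any generating set, so full linear span gives full dimension. In short, you have supplied the proof the paper delegates to the literature, by the argument one would expect to find there.
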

	The final statement of the above proposition implies that for every extreme point $x\in \mathcal C$ the rank of $\gambleset_x$ is $m = \dim V$. Besides, every $y\in N_{\mathcal C} (x)$ is a positive linear combination of of the vectors in $\gambleset_x$. The following proposition additionally holds.
	\begin{proposition}\label{prop-linear-independent-positive-combination}
		Let $h\in N_{\mathcal C} (x)$. Then there exists a linearly independent subset $\gambleset^I_x\subseteq \gambleset_x$ such that $h \in \mathrm{posi}\, \gambleset^I_x$. 
	\end{proposition}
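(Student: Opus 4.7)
The plan is to recognize the statement as the conic version of Carathéodory's theorem applied to the generating set $\gambleset_x$ supplied by Proposition~\ref{prop-cone-positive-hull}. By that proposition, any $h\in N_{\mathcal C}(x)$ already admits a representation $h = \sum_{f\in S}\lambda_f f$ with $S\subseteq \gambleset_x$ finite and all $\lambda_f > 0$. Starting from such a representation, I would shrink the support $S$ by exploiting any linear dependence among its elements until no dependence remains.

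Concretely, among all finite subsets $S\subseteq \gambleset_x$ admitting a strictly-positive-coefficient representation of $h$, I would choose one, call it $\gambleset_x^I$, of minimal cardinality, along with corresponding coefficients $\{\lambda_f\}_{f\in \gambleset_x^I}$. The claim is that $\gambleset_x^I$ is then automatically linearly independent, which gives the conclusion. Suppose, for contradiction, that it is not: then there are scalars $\mu_f$, not all zero, with $\sum_{f\in \gambleset_x^I}\mu_f f = 0$. After replacing $\mu_f$ by $-\mu_f$ if necessary, at least one $\mu_f$ is strictly positive. For $t\geqslant 0$, the identity
\begin{equation*}
h \;=\; \sum_{f\in \gambleset_x^I}\bigl(\lambda_f - t\mu_f\bigr)\, f
\end{equation*}
then holds for every $t$. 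Setting $t^\ast = \min\{\lambda_f/\mu_f \colon \mu_f > 0\} > 0$, all coefficients $\lambda_f - t^\ast \mu_f$ remain non-negative while at least one vanishes. Discarding the vanishing terms yields a strictly smaller subset of $\gambleset_x$ that still represents $h$ with positive coefficients, contradicting the minimality of $\gambleset_x^I$. Hence $\gambleset_x^I$ is linearly independent and $h\in \mathrm{posi}\,\gambleset_x^I$.

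I expect no serious obstacle here: the argument uses only general facts about conic hulls and linear dependence, and the only subtlety is orienting the dependence relation (by a global sign change) so that $t^\ast$ is well-defined and strictly positive. Everything specific to the polyhedron $\mathcal C$ enters only through the preceding proposition, which provides the initial conic representation of $h$ over $\gambleset_x$.
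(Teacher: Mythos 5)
Your argument is correct and is essentially the same as the paper's: both take a minimal subset of $\gambleset_x$ positively representing $h$ and show that any linear dependence would allow one coefficient to be driven to zero while the rest stay non-negative, contradicting minimality. Your version is just slightly more explicit about orienting the dependence relation and choosing the step size $t^\ast$.
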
 
	\begin{proof}
		Let $\gambleset'\subseteq \gambleset_x$ be a minimal set such that $h \in \mathrm{posi}\, \gambleset'$. To show that $\gambleset'$ is linearly independent, we use the method of contradiction. Hence, suppose that $\gambleset'$ is linearly dependent. Then there exists a non-trivial linear combination $\sum_{f\in \gambleset'} \beta_f f = \mathbf{0}$. Further let $\sum_{f\in \gambleset'} \alpha_f f = h$, where all $\alpha_f > 0$ by the assumption of minimality of $\gambleset'$. As there exists at least one $\beta_f \neq 0$, we can find some constant $c$ such that $\alpha_f + c\beta_f$ is zero for some $f$ and remains positive for the others. We then still have that $\sum_{\gambleset'} (\alpha_f + c\beta_f)f = h$ with at least one coefficient equal 0 and all others positive. Thus $h$ is a positive combination of a set strictly included in $\gambleset'$, which contradicts its minimality. This contradiction now confirms that $\gambleset'$ needs to be linearly independent. 
	\end{proof}
	\begin{corollary}\label{cor-positive-basis}
		Let $h\in N_{\mathcal C} (x)$, where $x$ is an extreme point of $\mathcal C$. Then there exists a basis $\gambleset^I_x\subseteq \gambleset_x$ of $V$, such that $h\in \mathrm{posi}\, \gambleset^I_x$. 
	\end{corollary}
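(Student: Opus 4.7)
The plan is to combine Proposition \ref{prop-linear-independent-positive-combination} with the final assertion of Proposition \ref{prop-cone-positive-hull}. First I would invoke Proposition \ref{prop-linear-independent-positive-combination} to obtain a linearly independent subset $\mathcal F' \subseteq \gambleset_x$ with $h \in \mathrm{posi}\, \mathcal F'$. If $\mathcal F'$ already has $\dim V$ elements, it is itself a basis and we are done.

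Otherwise, I would use extremality of $x$ to enlarge $\mathcal F'$ to a basis drawn from $\gambleset_x$. Here is where the final clause of Proposition \ref{prop-cone-positive-hull} enters: since $x$ is extreme, $\dim N_{\mathcal C}(x) = \dim V$, and because $N_{\mathcal C}(x) = \mathrm{posi}\, \gambleset_x$ is a cone whose linear span has the same dimension as itself, this forces $\mathrm{span}\, \gambleset_x = V$. Consequently, $\gambleset_x$ contains a basis of $V$, and by the standard basis extension lemma any linearly independent subset of $\gambleset_x$, and in particular $\mathcal F'$, can be enlarged to a basis $\gambleset^I_x \subseteq \gambleset_x$ of $V$.

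It remains to note that $\mathcal F' \subseteq \gambleset^I_x$ implies $\mathrm{posi}\, \mathcal F' \subseteq \mathrm{posi}\, \gambleset^I_x$, so $h \in \mathrm{posi}\, \gambleset^I_x$, as required.

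The only mildly delicate point is the step $\dim N_{\mathcal C}(x) = \dim V \Rightarrow \mathrm{span}\, \gambleset_x = V$. I would justify it briefly: the cone $\mathrm{posi}\, \gambleset_x$ and the linear subspace $\mathrm{span}\, \gambleset_x$ share the same affine hull, so they have equal dimension; thus a full-dimensional normal cone forces $\gambleset_x$ to span all of $V$. Everything else is a routine application of previously established facts, so I do not expect any significant obstacle beyond this.
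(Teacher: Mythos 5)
Your argument is correct and follows essentially the same route as the paper: apply Proposition~\ref{prop-linear-independent-positive-combination} to get a linearly independent positively spanning subset, use the full-dimensionality of the normal cone at an extreme point (equivalently, that $\gambleset_x$ has full rank) to extend it to a basis within $\gambleset_x$, and observe that the extra basis vectors enter the positive combination with zero coefficients. Your brief justification of why $\dim N_{\mathcal C}(x)=\dim V$ forces $\mathrm{span}\,\gambleset_x = V$ is a welcome elaboration of a step the paper takes for granted.
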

	\begin{proof}
		By Proposition~\ref{prop-cone-positive-hull}, the rank of $\gambleset_x$ equals the dimension of $V$. Moreover, by Proposition~\ref{prop-linear-independent-positive-combination}, $h$ is a positive linear combination of an independent subset of $\gambleset_x$. Now this subset can be completed with elements of $\gambleset_x$ to a basis of $V$, and the added vectors can be also be added to the positive linear combination with zero coefficients, thus forming a positive linear combination of the basis. 
	\end{proof}
	The above corollary is essential for our method which is based on representing gambles $f$ as non-negative linear combinations of bases consisting of elements of $\gambleset$ that lie in the same normal cone as $f$. 
	
	\subsection{Normal cones of imprecise transition rate matrices}
	In the case of imprecise Q-matrices denoted generically by $\qset$, we assumed that it has separately specified rows which implies that it is of the form $\mathcal Q=\times_{k\in \states} \mathcal Q_k$, where each $\mathcal Q_k$ is a convex polyhedron of vectors $q_k$, represented by the constraints 
	\begin{align}
		q_k f & \geqslant \low Q_k(f) & \text{for every $f\in \gambleset$}\label{eq-const1};\\ 
		q_k 1_{\{l\}} & \geqslant 0 & \text{for every $l \neq k$}\label{eq-const2}; \\ 
		q_k 1_\states &= 0. \label{eq-const3}
	\end{align} 
	\begin{remark}
		Note that we have now switched the notation of matrix rows, previously denoted by $Q_k$, to $q_k$. This is because we now view the rows as row vectors instead of parts of particular matrices. They do still form transition matrices together with other rows, but the focus is now more on the rows as elements of the row set $\qset_k$. When the rows correspond to explicitly mentioned matrices, we will still use the notation of the form $Q_k$. 
	\end{remark}

	\begin{remark}
		It might seem that constraints \eqref{eq-const1} and \eqref{eq-const2} are not general enough because of the $\geqslant$ form. However, it is readily verified that constraints of the form of inequalities $\leqslant$ or with an equality sign can be easily represented either by changing the sign or forming two reversed inequalities instead of an equality. 
	\end{remark}
	\begin{remark}
		Constraints \eqref{eq-const2} are of the same form as \eqref{eq-const1}, and could be even implied by the latter. Therefore, we adopt the convention that the gambles of the form $1_{\{j\}}$ are always assumed to be contained in $\gambleset$, together with the corresponding constraints and are removed if they are already implied by the remaining constraints. The primary reason for this is a simplified notation. Yet, the constraint \eqref{eq-const3} we choose to separate from the inequality constraints and therefore also not consider $1_\states$ as an element of $\gambleset$. 
	\end{remark} 
	Take a row set $\mathcal Q_k$, which is a convex set of vectors: 
	\[ \mathcal Q_k = \{ q\in \RR^m \colon q 1_\states = 0, q f \geqslant \low Q_k (f) ~\forall f \in \gambleset \} . \]
	For every element $q\in \mathcal Q_k$, the corresponding normal cone is the set of vectors 
	\[ N_{\mathcal Q_k} (q) = \{ f\in \allgambles\colon q f \leqslant p f~\forall p\in \mathcal Q_k \}. \]
	(See e.g. \cite{gruber:07CDG}.)
	Vector $q$ can be considered as a $k$-th row of a matrix $Q\in \mathcal Q$, and its normal cone is the set of all vectors $f\in \allgambles$ for which $q = \arg \min_{q'\in \mathcal Q_k}q' f$. 
	
	To simplify the notation, we will now assume the gambles in $\gambleset$ are enumerated by some indices $i\in I$, where $I$ is an index set. Thus $\gambleset = \{ f_i \colon i\in I\}$. 
	By Proposition~\ref{prop-cone-positive-hull}, every element $f$ of the normal cone $N_{\mathcal Q_k} (q)$ can be represented as a linear combination of elements in $\gambleset$ that are contained in the cone: 
	\begin{equation}\label{eq-vector-decomposition}
		f = \sum_{i\in I_q} \alpha_i f_i + \alpha_0 1_\states, 
	\end{equation}
	where $I_q = \{i\in I\colon q f_i = \low Q(f_i)\}$; $\alpha_i \geqslant 0$ for all $i\in I_q$ and $\alpha_0$ is an arbitrary real constant. Here we used the fact that the constraint $q1_\states = 0$, can equivalently be stated as a combination of two distinct constraints, $q1_\states \geqslant 0$ and $q(-1_\states) \geqslant 0$, and therefore, depending on the sign of $\alpha_0$, either $1_\states$ or $-1_\states$ appears in the above linear combination with a positive coefficient. We will call the subset $\gambleset_{I_q} = \{ f_i \colon i\in I_q \}$ the \emph{basis of the cone} $N_{\mathcal Q_k} (q)$. 
	

	\section{Norms of $Q$-matrices}\label{s-nqm}
	In our analysis we will use vector and matrix norms. For vectors $f$ we will use the maximum norm
	\begin{equation}\label{eq-l1}
		\Vert f \Vert = \max_{i\in \states} |f_i|,
	\end{equation}
	and the corresponding operator norm for matrices
	\begin{equation}\label{eq-l1-matrix}
		\Vert Q \Vert = \max_{1\leqslant k\leqslant m} \sum_{l=1}^m |q_{kl}|.
	\end{equation}
	For every stochastic matrix $P$, we therefore have that $\Vert P \Vert = 1$, which implies that $\Vert e^Q \Vert = 1$ for every $Q$-matrix. In general, $Q$ matrices may have different norms, though.
	
	For a bounded closed set of vectors $\gambleset$ we will define 
	\begin{equation}
		\Vert \gambleset \Vert = \max_{f\in\gambleset} \Vert f \Vert 
	\end{equation}
	and for a bounded closed set of matrices $\qset$
	\begin{equation}
		\Vert \qset \Vert = \max_{Q\in\qset} \Vert Q \Vert .
	\end{equation}
	It has been shown in \cite{erreygers2017imprecise} that for an imprecise $Q$-matrix 
	\[ \| \qset \| = 2\max\left\{ \left|[\low Q 1_{\{ k \}}]_k\right|\colon k\in \states \right\} \] 
	holds, where $\low Q$ is the corresponding lower transition operator.

	The distance between two vectors $f$ and $g$ is defined as $d(f, g) = \Vert f-g \Vert$, and the maximal distance between two elements of a set of vectors $\gambleset$ will be called the \emph{diameter} of the set and denoted with $\delta(\gambleset) = \max_{f, g\in\gambleset} d(f, g)$. Additionally, we define the distance between two matrices as $d(Q, R) = \Vert Q-R \Vert$, while the diameter of an imprecise $Q$-matrix $\qset$ we pronounce as the \emph{imprecision} of $\qset$, denoted by $\iota(\qset) = \max_{Q, R\in \qset} d(Q, R)$. The degree of imprecision has been previously defined in \cite{vskulj2017perturbation} in the $L_1$ metric for the case of imprecise discrete time Markov chains. 
	
	The following proposition is immediate.
	\begin{proposition}\label{prop-bound-iota}
		Let $\qset$ be an imprecise $Q$-matrix. Then $\iota(\qset) \leqslant 2\Vert \qset \Vert $. 
	\end{proposition}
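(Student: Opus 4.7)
My plan is to observe that the inequality is essentially an immediate consequence of the triangle inequality applied to the operator norm. The diameter is defined as a maximum of pairwise distances $\|Q - R\|$ over the set, and each such distance is bounded in terms of the norms of $Q$ and $R$ individually.

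More precisely, first I would fix arbitrary $Q, R \in \qset$ and invoke the triangle inequality for the operator norm defined in \eqref{eq-l1-matrix} to write
\begin{equation*}
    d(Q, R) = \|Q - R\| \leqslant \|Q\| + \|R\|.
\end{equation*}
Since both $Q$ and $R$ lie in $\qset$, each of $\|Q\|$ and $\|R\|$ is bounded above by $\|\qset\| = \max_{Q' \in \qset} \|Q'\|$, yielding $\|Q - R\| \leqslant 2\|\qset\|$.

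Finally I would take the maximum of the left-hand side over all $Q, R \in \qset$, which exists because $\qset$ was assumed closed and bounded, to obtain
\begin{equation*}
    \iota(\qset) = \max_{Q, R \in \qset} \|Q - R\| \leqslant 2\|\qset\|,
\end{equation*}
which is the desired claim. There is no genuine obstacle here: the statement is flagged as ``immediate'' in the excerpt, and the reasoning uses only the triangle inequality together with the definitions of $\iota(\qset)$ and $\|\qset\|$; no particular structure of $Q$-matrices or separately specified rows is required for this bound.
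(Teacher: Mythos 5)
Your proof is correct and is exactly the argument the paper has in mind: the paper states the proposition as ``immediate'' and omits the proof, and the intended justification is precisely the triangle inequality $\|Q-R\|\leqslant\|Q\|+\|R\|\leqslant 2\|\qset\|$ followed by taking the maximum over $Q,R\in\qset$. Nothing is missing.
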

	\begin{proposition}
		Let $\qset$ be an imprecise $Q$-matrix and $\low Q$ its associated lower transition operator. Then we have that $\| \low Q f - \low Qf' \| \leqslant \| \qset \| \| f - f'\|$ for every pair of gambles $f, f'\in\allgambles$. 
	\end{proposition}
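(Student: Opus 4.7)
The plan is to leverage two facts established earlier in the excerpt: first, the separately specified rows property guarantees that for any gamble $h$ there exists a matrix $Q_h\in\qset$ with $\low Q h = Q_h h$; second, the operator norm \eqref{eq-l1-matrix} is precisely the maximum absolute row sum, so that $|[Q(f-f')]_k|\leqslant \|Q\|\,\|f-f'\|$ for every row index $k$, for any $Q$ in the set.

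Accordingly, I would begin by picking matrices $Q_f, Q_{f'}\in\qset$ that attain the componentwise minima: $\low Q f = Q_f f$ and $\low Q f' = Q_{f'} f'$. Fix a coordinate $k\in\states$. Since $[Q_f f]_k$ is the minimum of $[Qf]_k$ over $Q\in\qset$, we have $[Q_f f]_k \leqslant [Q_{f'} f]_k$, and symmetrically $[Q_{f'} f']_k \leqslant [Q_f f']_k$. Subtracting yields the two-sided estimate
\begin{equation*}
[Q_{f'}(f-f')]_k \;\geqslant\; [\low Q f]_k - [\low Q f']_k \;\geqslant\; [Q_f(f-f')]_k,
\end{equation*}
so that $|[\low Q f]_k - [\low Q f']_k|$ is bounded by the maximum of $|[Q_{f'}(f-f')]_k|$ and $|[Q_f(f-f')]_k|$.

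For either matrix $Q\in\{Q_f,Q_{f'}\}\subseteq \qset$, the triangle inequality on the $k$-th component gives
\begin{equation*}
|[Q(f-f')]_k| \;=\; \Bigl|\sum_{l=1}^m q_{kl}(f_l-f'_l)\Bigr| \;\leqslant\; \Bigl(\sum_{l=1}^m |q_{kl}|\Bigr)\,\|f-f'\| \;\leqslant\; \|Q\|\,\|f-f'\| \;\leqslant\; \|\qset\|\,\|f-f'\|.
\end{equation*}
Taking the maximum over $k$ then produces $\|\low Q f - \low Q f'\|\leqslant \|\qset\|\,\|f-f'\|$, as required.

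There is no real obstacle here; the only subtlety is making sure that the minimizing matrices depend on $f$ and $f'$ separately (so the bound must be established coordinatewise and then combined), which is precisely why both inequalities of the sandwich are needed. The argument never uses the $Q$-matrix structure beyond what is packaged into $\|\qset\|$, so the same Lipschitz estimate would in fact hold for $\low Q$ derived from any closed convex bounded family of matrices with separately specified rows.
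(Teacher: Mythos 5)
Your proof is correct and follows essentially the same strategy as the paper's: sandwich the componentwise difference $[\low Q f]_k - [\low Q f']_k$ between two evaluations of $f-f'$ by matrices from $\qset$, then apply the row-sum operator norm bound $\|\qset\|\,\|f-f'\|$. The only (harmless) difference is that you obtain the sandwich directly from explicit minimizing matrices $Q_f, Q_{f'}$, whereas the paper phrases it via the super- and subadditivity of the lower and upper envelope operators $\low Q$ and $\up Q$ and then selects a single maximizing matrix $\tilde Q$ using the separately specified rows property.
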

	\begin{proof}
		By definition we have that 
		\begin{equation*}
			\| \low Qf - \low Qf' \| = \max_{i\in \states} | \low Q_i (f) - \low Q_i (f') |. 
		\end{equation*}
		Now for every $i\in \states$, the following inequality follows from basic properties of lower envelope operators
		\begin{equation*}
			\low Q_i (f') + \low Q_i (f-f') \le \low Q_i(f) \le \low Q_i(f') + \up Q_i(f-f'),
		\end{equation*}
		implying further that 
		\begin{equation*}
			\low Q_i (f-f') \le \low Q_i(f) - \low Q_i (f')  \le \up Q_i(f-f'),
		\end{equation*}
		and hence 
		\begin{equation*}
			|\low Q_i(f) - \low Q_i (f')| \le	\max\{|\low Q_i (f-f')|, |\up Q_i(f-f')|\}.
		\end{equation*}
		Moreover, it follows by the definition of the lower and upper envelope operators that
		\begin{equation*}
			\low Q_i (f-f')\le Q_i (f-f') \le \up Q_i(f-f'),
		\end{equation*}
		for every $Q_i\in \qset_i$ and 
		\begin{equation*}
			\max_{Q_i\in \qset_i} |Q_i(f-f')| = \max\{|\low Q_i (f-f')|, |\up Q_i(f-f')|\}.
		\end{equation*}
		By separately specified rows property and compactness of $\qset$, there actually exists a matrix $\tilde Q\in \qset$ such that $|\tilde Q_i(f-f')| = \max\{|\low Q_i (f-f')|, |\up Q_i(f-f')|\}$ for every $i\in \states$. Summarizing the above equations gives:
		\begin{align*}
			\| \low Qf - \low Qf' \| = & \max_{i\in \states}|\low Q_i(f)-\low Q_i(f')| \\
			\le & \max_{i\in \states} \max\{|\low Q_i (f-f')|, |\up Q_i(f-f')|\} \\
			= & \| \tilde Q(f-f') \|=  \max_{Q\in \qset} \| Q(f-f') \| \\
			\le &  \max_{Q\in \qset} \| Q \| \| (f-f') \|   = \| \qset \| \| f - f'\|. 
		\end{align*} 		
	\end{proof}
	
	In the literature, the variational seminorm 
	\begin{equation*}
		\| f \|_v = \max f - \min f 
	\end{equation*}
	is also often used, and proves especially useful in the context of stochastic processes. In \cite{erreygers2017imprecise} the quantity $\| f\|_c = \frac{1}{2}\| f \|_v$ is also used. The reason to turn from norms to the seminorm is in the simple fact that $\| f\|_v = 0$ implies that $f$ is constant and further that $Qf = 0$ for every $Q$-matrix $Q$ and $Tf = f$ for every transition operator $T$. Moreover, $\| Tf \|_v \leqslant \| f\|_v$ holds for every $f\in \allgambles$. The inequality $\| f\|_c \leqslant \| f \|$ is also immediate. 
	\begin{proposition}\label{prop-q-matrix-variational}
		Let $Q$ be a $Q$-matrix and $f\in \allgambles$ a gamble. Then $\| Q f \| \leqslant \| Q \| \| f \|_c$. 
	\end{proposition}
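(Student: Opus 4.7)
The plan is to exploit the defining property of a $Q$-matrix, namely that each row sums to zero, i.e. $Q\mathbf{1}_\states = \mathbf 0$. This means that $Q$ annihilates constant gambles, so $Q(f+\lambda \mathbf 1_\states) = Qf$ for every $\lambda \in \RR$. Consequently, we are free to shift the gamble $f$ by any constant before applying $Q$, without affecting the left-hand side of the claimed inequality.

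The key step is to choose the shift optimally. I would set
\begin{equation*}
 \lambda = -\tfrac{1}{2}\bigl(\max f + \min f\bigr)
\end{equation*}
and let $g = f + \lambda \mathbf 1_\states$. A short computation then shows that $\max g = -\min g = \tfrac{1}{2}(\max f - \min f)$, so
\begin{equation*}
 \|g\| = \max_{i\in\states} |g_i| = \tfrac{1}{2}(\max f - \min f) = \|f\|_c .
\end{equation*}
In other words, the centered representative $g$ realizes the seminorm $\|\cdot\|_c$ as its actual maximum norm.

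After that, the conclusion follows from the ordinary operator norm inequality applied to $g$: since $Qf = Qg$, we have
\begin{equation*}
 \|Qf\| = \|Qg\| \leqslant \|Q\|\,\|g\| = \|Q\|\,\|f\|_c,
\end{equation*}
where the middle inequality is the standard compatibility of the operator norm \eqref{eq-l1-matrix} with the vector norm \eqref{eq-l1}. There is no real obstacle here; the only subtle point is recognizing that the row-sum-zero property of $Q$-matrices is exactly what allows us to upgrade the trivial bound $\|Qf\|\leqslant \|Q\|\|f\|$ to the sharper bound with the centered seminorm $\|f\|_c$ in place of $\|f\|$.
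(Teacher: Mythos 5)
Your proof is correct and is essentially identical to the paper's own argument: the paper also centers $f$ by subtracting the constant $f_M = \tfrac{1}{2}(\max f + \min f)$, notes that $Q$ annihilates constants, and applies the standard operator norm bound to the centered vector. No further comment is needed.
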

	\begin{proof}
		Let $f_M = \frac{\max f + \min f}{2}$ and $f_V = f-f_M$. Clearly $\| f_V \| = \frac{\max f-\min f}{2} = \| f \|_c$ and, as $f_M$ is a constant, $\| Qf \| = \| Q(f_M + f_V) \| = \| Qf_V \| \leqslant \| Q \| \| f_V \| = \| Q \| \| f \|_c$. 
	\end{proof}
	\begin{corollary}
		Let $\qset$ be an imprecise $Q$-matrix and $\low Q$ the associated lower transition operator. Then for all $f\in \allgambles, ~\|\low Q f \| \leqslant \| \qset \| \| f \|_c$. 
	\end{corollary}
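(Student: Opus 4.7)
The plan is to reduce this corollary directly to Proposition~\ref{prop-q-matrix-variational} by exploiting the separately specified rows assumption, which was highlighted earlier in the excerpt as guaranteeing that the componentwise minimum defining $\low Q f$ is actually attained at a single matrix in $\qset$.

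First I would invoke the separately specified rows property to produce a matrix $Q_f \in \qset$ such that $Q_f f = \low Q f$ (as a vector equality, not merely componentwise). This step is essentially given, since the excerpt explicitly states after the definition \eqref{eq-lto} that some $Q_f \in \qset$ exists with $Q_f f = \low Q f$. In particular, $\|\low Q f\| = \|Q_f f\|$.

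Next I would apply Proposition~\ref{prop-q-matrix-variational} to the precise $Q$-matrix $Q_f$ and the gamble $f$, which yields $\|Q_f f\| \leqslant \|Q_f\| \|f\|_c$. Finally, by the definition of $\|\qset\|$ as the maximum of $\|Q\|$ over $Q \in \qset$, we have $\|Q_f\| \leqslant \|\qset\|$, so chaining the inequalities gives $\|\low Q f\| \leqslant \|\qset\| \|f\|_c$.

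There is no real obstacle here; the corollary is essentially a direct specialization of Proposition~\ref{prop-q-matrix-variational} through a single representative matrix $Q_f$. The only subtle point worth stating carefully is that the separately specified rows assumption is what allows a single $Q_f$ to realize the vector $\low Q f$ simultaneously in all components — without this, one would have to argue row by row, choosing a different minimizer $Q_k \in \qset_k$ for each $k$, and then bound $\|Q_k\| \leqslant \|\qset\|$ uniformly in $k$, which works equally well but is notationally heavier.
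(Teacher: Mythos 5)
Your proof is correct and follows exactly the route the paper intends: the paper states this corollary without proof, and the natural derivation is precisely yours --- use the separately specified rows property to obtain a single matrix $Q_f\in\qset$ with $Q_f f = \low Q f$, then apply Proposition~\ref{prop-q-matrix-variational} to $Q_f$ and bound $\|Q_f\|\leqslant\|\qset\|$. No gaps.
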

	
	\begin{proposition}
		Let $\qset$ be an imprecise $Q$-matrix and $\low Q$ the associated lower transition operator. Further take some extremal matrix $Q\in \qset$ and $h$ a vector such that $h = h_n + h_e$, where $h_n\in N_{\mathcal Q} (Q)$. Then $\Vert Qh - \low Qh \Vert \leqslant \iota(\qset)\Vert h_e \Vert_c \leqslant 2\| \qset \|\| h_e \|_c \leqslant 2\| \qset \| \| h_e \|$. 
	\end{proposition}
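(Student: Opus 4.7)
The plan is to exploit the decomposition $h = h_n + h_e$ together with the fact that $Q$ minimizes $q \mapsto qh_n$ on $\qset$, so the entire discrepancy between $Qh$ and $\low Qh$ will be traceable to the component $h_e$.

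First, I would use the separately specified rows property to pick, for each row index $i$, a minimizer $q^*_i \in \qset_i$ of $q \mapsto qh$, and assemble these into a matrix $Q^* \in \qset$ with $\low Q h = Q^* h$. Then
\begin{equation*}
  Qh - \low Q h \;=\; (Q-Q^*)h \;=\; (Q-Q^*)h_n + (Q-Q^*)h_e.
\end{equation*}
Since $h_n \in N_{\qset}(Q)$, componentwise we have $Qh_n \leqslant Q' h_n$ for every $Q' \in \qset$; in particular $(Q-Q^*)h_n \leqslant \mathbf 0$. On the other hand $(Q-Q^*)h \geqslant \mathbf 0$ componentwise, since $Q^*$ attains the componentwise minimum. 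Sandwiching these two facts yields
\begin{equation*}
  0 \;\leqslant\; [Qh - \low Q h]_i \;\leqslant\; [(Q-Q^*)h_e]_i \quad \text{for each } i,
\end{equation*}
and hence $\|Qh - \low Q h\| \leqslant \|(Q-Q^*)h_e\|$.

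Second, I would bound $\|(Q-Q^*)h_e\|$ by reusing the idea behind Proposition~\ref{prop-q-matrix-variational}. The matrix $Q-Q^*$ has zero row sums because both $Q$ and $Q^*$ are $Q$-matrices, so writing $h_e = (h_e)_M + (h_e)_V$ with $(h_e)_M$ constant kills the constant part and gives $\|(Q-Q^*)h_e\| \leqslant \|Q-Q^*\|\,\|h_e\|_c$. By definition of the imprecision, $\|Q-Q^*\| \leqslant \iota(\qset)$, and by Proposition~\ref{prop-bound-iota}, $\iota(\qset) \leqslant 2\|\qset\|$. Combined with the trivial inequality $\|h_e\|_c \leqslant \|h_e\|$, these steps produce the full chain
\begin{equation*}
  \|Qh - \low Q h\| \;\leqslant\; \iota(\qset)\,\|h_e\|_c \;\leqslant\; 2\|\qset\|\,\|h_e\|_c \;\leqslant\; 2\|\qset\|\,\|h_e\|.
\end{equation*}

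The only mildly delicate step is the first one: one must recognise that although $\low Q$ is not linear, inserting a single matrix $Q^* \in \qset$ that attains the componentwise lower envelope on $h$ reduces the problem to a linear one, and then use the normal-cone hypothesis to discard the $h_n$ contribution. Everything else is a direct invocation of the earlier propositions, so I do not expect any serious obstacle.
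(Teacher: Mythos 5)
Your proof is correct and follows essentially the same route as the paper: both rest on the decomposition $h = h_n + h_e$, use the normal-cone hypothesis to eliminate the $h_n$ contribution, and then bound the remaining $h_e$ term via the zero-row-sum/variational-seminorm argument together with $\iota(\qset) \leqslant 2\Vert\qset\Vert$. The only cosmetic difference is that you perform the reduction with a single minimizing matrix $Q^*$ and a componentwise sandwich, whereas the paper invokes superadditivity of $\low Q$ to pass to $\Vert Qh_e - \low Qh_e\Vert$; your version is, if anything, a bit more explicit about why the resulting norm inequality holds.
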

	\begin{proof}
		Take some $h = h_n + h_e$. Since $h_n\in N_\qset(Q)$, it follows that $Qh_n = \low Qh_n$. 
		Using superadditivity of $\low Q$ we obtain 
		\begin{multline*}
			\| Qh - \low Qh \| \leqslant \| Qh_n + Q h_e - \low Qh_n -\low Q h_e \| \\ 
			\leqslant \| Q h_e - \low Qh_e \| \leqslant \iota(\qset)\| h_e \|_c \leqslant 2\| \qset \| \| h_e \|_c \leqslant 2\| \qset \| \| h_e \|,
		\end{multline*}
		where the penultimate inequality follows from Proposition~\ref{prop-bound-iota}. 
	\end{proof}
	
	\section{Numerical methods for CTIMC bounds calculation}\label{s-nmbc}
	In this section we discuss methods for calculation of the solutions of the differential equation \eqref{eq-BSDE}. Let $h_t$ be a solution of this equation 
	with the initial value $h_0$. The initial value may be an approximation at a previous stage or interval. 	
	It has been shown in \cite{skulj:15AMC} (Proposition 7) that the solution $h_t$ is Lipschitz continuous. More precisely, the following estimate holds
	\begin{equation}\label{eq-h-lipschitz}
		\| h_{t+\Delta t} - h_t \| \leqslant \Delta t \| \qset \| \| h_0\| e^{\Delta t \| \qset \|} = \Delta t \| \qset \| \| h_0 \| + o(\Delta t).   
	\end{equation}
	\subsection{Matrix exponential method}	
	Assume that the initial vector $h_0 = h$ is given and let $Q$ be an extreme $Q$-matrix such that $Qh = \low Qh$. By definition, the initial vector belongs to the collection of normal cones $N_{\qset_k}(Q_k)$ for every $k\in \{ 1, \ldots, m\}$. Thus, for each index $k$, we have an index set $J\subseteq I$, such that $\gambleset_{J}$ forms the basis of $N_{\qset_k}(Q_k)$. Moreover, by Corollary~\ref{cor-positive-basis}, a basis $\gambleset_{\tilde J}$ of $\RR^m$ exists, so that $h$ is a non-negative linear combination of elements of $\gambleset_{\tilde J}$. In our case, the basis contains either $1_\states$ or $-1_\states$, which are excluded from the set of gambles indexed by $I$. Let $I_{k, h}$ denote the index set which together with $1_\states$ or $-1_\states$ forms the required basis corresponding to the $k$-th row. Then we can write: 
	\begin{equation}\label{eq-nc-decomposition}
		h = \sum_{i\in I_{h, k}} \alpha_{ki}f_i + \alpha_{k0} 1_\states,
	\end{equation} 
	where $\alpha_{ki} \geqslant 0$ for every $i \in I_{h, k}, \alpha_{k0} \in \RR$. By these assumptions, the solution $h_t$ of equation \eqref{eq-BSDE} can be written as a linear combination of the form \eqref{eq-nc-decomposition} for every $t\geqslant 0$, yet not necessarily with non-negative coefficients $\alpha_{ki}$ for $t\gg 0$.
	\begin{remark}
		In the case described above where $\gambleset\cap N_{\qset_k}(Q_k)$ is not linearly independent, instead of the entire normal cone we only consider its part that contains the gamble $h$ and is positively spanned by the linearly independent subset. In principle such a set may only represent a fraction of the normal cone. In order to avoid repeating this fact, we will from now on slightly abuse terminology to name a cone spanned by a linearly independent set a normal cone. Yet, apart from the definition, this fact does not have any other negative impact, as these subsets of the normal cones are cones as well and they may likely become normal cones if only the constraints are slightly changed. 
	\end{remark}
	
	In the general case, the vector $h = h_0$ would belong to the interior of a normal cone, whence the coefficients $\alpha_{ki}$ are all strictly positive. For a small enough time $T>0$, the values of $h_t$ may still belong to the same normal cone, whence they would satisfy $\low Q h_t = Q h_t$, for every $t\in [0, T]$. In that case, the exact solution $h_T$ can be found explicitly as $h_T = e^{TQ}h_0$. Quite surprisingly, it has been shown in \cite{skulj:15AMC} that checking whether the above condition holds is possible by merely considering the solution at the end-point $T$. More precisely, we need to consider the partial sums corresponding to the solution. An implementation of this exact method was also proposed in the same paper, yet here we improve significantly on its efficiency by making use of the normal cones. 
	
	To employ the exact method efficiently, it is necessary to aptly implement the following steps:
	\begin{itemize}
		\item finding the time interval $T$ where the method is applicable with $T$ as large as possible,
		\item verify whether the method is applicable on a given interval $[0, T]$ with acceptable maximal possible error. 
	\end{itemize}
	The second step suggests we might have an interval where the solutions $h_t$ do not lie exactly in the required normal cone, but sufficiently close to it, so that the error remains within acceptable bounds.

	\subsection{Finding a linearly independent positive linear combination}\label{ss-fliplc}
	Propositions~\ref{prop-cone-positive-hull} and \ref{prop-linear-independent-positive-combination} ensure that the set $\gambleset_{I_{h, k}}$ satisfying \eqref{eq-nc-decomposition} can be chosen so that together with $1_\states$ it forms a basis of $\allgambles$. Finding this set however is not a trivial task. This is because it requires finding a non-negative solution to a system of linear equations. We are therefore looking for a solution $(\alpha_{ki})_{i\in I_{h, k}\cup \{ 0 \}}$ of equation \eqref{eq-nc-decomposition} such that $\alpha_{ki}\geqslant 0$ for every $i\in I_{h, k}$, while $\alpha_{k0}$ can be arbitrary. The exception of $\alpha_{k0}$ can be handled by adding $\alpha_{k0+}$ and $\alpha_{k0-}$ corresponding to the vectors $1_\states$ and $-1_\states$ respectively, which can clearly be required both non-negative.

	The above problem is known as the auxiliary problem in the two phase simplex method, which can be stated as a linear programming problem for minimizing the objective function $\sum_{i\in I_{h, k}\cup \{ 0 \}}\alpha_{ki}$ subject to $A\alpha_k = b$ and $\alpha_k \geqslant 0$. Knowing the solution exists, this is a routine linear programming task. 
	
	Once a solution $\alpha_k\geqslant 0$ has been found, we proceed by eliminating the vectors $f_i$ from $\gambleset_{I_{h, k}}$ in the way that can be directly deduced from the proof of Proposition~\ref{prop-linear-independent-positive-combination}, until they form a linearly independent set. Therefore, if the vectors are not linearly independent, a linear combination $\sum_{i \in I_{h, k}} \beta_{ki}f_i + \beta_{k0} 1_\states = 0$ exists. Further, take a collection of coefficients so that $\sum_{i \in I_{h, k}} \alpha_{ki}f_i + \alpha_{k0} 1_\states = h$. We now take a suitable constant $c$ so that $(\alpha_{ki}+c\beta_{ki}) \geqslant 0$ for all $i\in I_{h, k}$ and $\alpha_{kj}+c\beta_{kj} = 0$ for some $j\in I_{h, k}$ (note that at least one $\beta_{ki}\neq 0$). Thus, we have obtained a new solution to equation \eqref{prop-linear-independent-positive-combination}, with $f_j$ omitted. This procedure completes with a linearly independent set $\gambleset_{I_{h, k}}$.

	The obtained set $\gambleset_{I_{h, k}}$, however, may not form a basis of $\allgambles$ in which case we complete it to a basis using the remaining vectors from the normal cone. This is possible because by Proposition~\ref{prop-cone-positive-hull} the set has full rank. Obviously, the added gambles appear in the linear combination with zero coefficients.  
		
	\subsection{Checking applicability of the matrix exponential method}\label{ss-camem}
	The procedure of checking the applicability of the exact method to an interval $[0, T]$ is based on the following results proposed in \cite{skulj:15AMC}. 
	\begin{lemma}
		Let 
		\begin{equation}\label{eq-inf-ser}
		\sigma(t) = \sum_{s = 0}^\infty a_s t^s
		\end{equation}
		be a power series that converges in an interval $[0, T]$ and denote its partial sums with
		\begin{equation}\label{eq-part-poly}
		p_r(t) = \sum_{s = 0}^r a_s t^s. 
		\end{equation}
		Then, for every $t \in [0, T]$, we have that 
		\[ p_r(t) = \sum_{s = 0}^r \lambda_s p_s(T) \]
		for some non-negative coefficients $\lambda_s$ for which $\sum_{s =0}^r \lambda_s = 1$. 
	\end{lemma}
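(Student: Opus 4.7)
The plan is to write $p_r(t)$ explicitly as a convex combination of $p_0(T),\dots,p_r(T)$ by solving for the coefficients directly. Substituting the definition $p_s(T)=\sum_{j=0}^{s}a_jT^j$ into the desired identity and interchanging the order of summation gives
\begin{equation*}
\sum_{s=0}^{r}\lambda_s p_s(T)=\sum_{j=0}^{r}a_j T^j\Bigl(\sum_{s=j}^{r}\lambda_s\Bigr),
\end{equation*}
which must equal $p_r(t)=\sum_{j=0}^{r}a_jt^j$. Thus it suffices to choose $\lambda_0,\dots,\lambda_r\geqslant 0$ so that the tail sums satisfy $\sum_{s=j}^{r}\lambda_s=(t/T)^j$ for every $j=0,1,\dots,r$.

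Setting $u:=t/T\in[0,1]$ (the edge case $T=0$ forces $t=0$ and is trivial), the tail-sum conditions determine the $\lambda_s$ uniquely by a telescoping argument: subtracting consecutive equations yields $\lambda_s=u^s-u^{s+1}=u^s(1-u)$ for $s=0,\dots,r-1$ and $\lambda_r=u^r$. Since $u\in[0,1]$, each $\lambda_s$ is non-negative, and the $j=0$ condition gives $\sum_{s=0}^{r}\lambda_s=1$, which is also immediate from the geometric series identity
\begin{equation*}
(1-u)\sum_{s=0}^{r-1}u^s+u^r = (1-u^r)+u^r = 1.
\end{equation*}

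The final step is simply to verify that these explicit coefficients reproduce the desired equality, which follows from the derivation above by reversing the order-of-summation computation. There is no real obstacle here: the whole statement reduces to recognising that $p_r(t)-p_r(T)$ can be rewritten as a telescoping combination of the increments $p_s(T)-p_{s-1}(T)=a_sT^s$ weighted by the factor $u^s-1$, which is exactly the content of the elementary identity above. The only mild subtlety is handling the boundary value $u=1$, where all coefficients except $\lambda_r$ vanish and the formula reduces to $p_r(t)=p_r(T)$, which is consistent.
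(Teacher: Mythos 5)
Your proof is correct and complete: the explicit choice $\lambda_s=u^s(1-u)$ for $s<r$ and $\lambda_r=u^r$ with $u=t/T\in[0,1]$ does satisfy the tail-sum conditions $\sum_{s=j}^r\lambda_s=u^j$, and reversing the interchange of summation recovers $p_r(t)$ exactly; note also that the convergence hypothesis plays no role, since the claim is a purely algebraic identity about finite partial sums. The paper itself gives no proof of this lemma --- it defers to the cited reference \cite{skulj:15AMC} --- so there is nothing in the text to compare against, but your constructive argument is the natural one and handles the boundary cases $t=0$ and $t=T$ correctly.
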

	\begin{remark}
		Put differently, the above lemma says that $p_r(t)$ is a convex combination of $p_s(T)$ for $s\in \{ 0, \ldots, r\}$. 
	\end{remark}
	The following corollaries follow immediately. 
	\begin{corollary}\label{cor-matrix-series-cone}
		Let $Q$ be an arbitrary square matrix of order $m$, $h\in \allgambles$ and $\sigma$ a function defined by an infinite power series as in \eqref{eq-inf-ser}. Further let $p_r(tQ)$ be the partial sums \eqref{eq-part-poly}. Then, for every $t \in [0, T]$, some non-negative coefficients $\lambda_s$ satisfying $\sum_{s =0}^r \lambda_s = 1$ exist such that  
		\[ p_r(tQ)h = \sum_{s = 0}^r \lambda_s p_s(TQ)h. \]
	\end{corollary}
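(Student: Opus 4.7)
The plan is to reduce the corollary directly to the preceding scalar lemma by observing that the convex weights $\lambda_s$ guaranteed there are essentially forced to be universal. That is, although the lemma is formulated for one fixed power series, the identity $p_r(t)=\sum_{s=0}^r \lambda_s p_s(T)$ must hold as an identity in the unknown coefficients $a_s$: expanding both sides as linear forms in $a_0,\ldots,a_r$ and matching the coefficient of each $a_u$ gives the scalar relation
\begin{equation*}
  t^u \;=\; T^u \sum_{s=u}^{r}\lambda_s \qquad \text{for every } u=0,1,\ldots,r,
\end{equation*}
equivalently $\sum_{s=u}^r \lambda_s = (t/T)^u$. This is the only content of the lemma we need.

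With that in hand, the passage from scalars to matrices is purely formal. First I would apply the lemma to the given series $\sigma(t)=\sum a_s t^s$ on $[0,T]$ to obtain the weights $\lambda_0,\ldots,\lambda_r\geqslant 0$ with $\sum_s \lambda_s=1$. Then I would compute, writing $p_r(tQ)h=\sum_{u=0}^r a_u t^u Q^u h$ and using $(t/T)^u=\sum_{s=u}^r\lambda_s$,
\begin{align*}
  p_r(tQ)\,h
  &= \sum_{u=0}^{r} a_u\, T^u Q^u h \,\Big(\tfrac{t}{T}\Big)^{u}
  = \sum_{u=0}^{r} a_u\, T^u Q^u h\sum_{s=u}^{r}\lambda_s \\
  &= \sum_{s=0}^{r} \lambda_s \sum_{u=0}^{s} a_u\, T^u Q^u h
  = \sum_{s=0}^{r}\lambda_s\, p_s(TQ)\,h,
\end{align*}
after interchanging the order of summation. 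The non-negativity of the $\lambda_s$ and the fact that they sum to $1$ come straight from the lemma, so no additional constraints need to be verified.

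The only subtlety is the case $T=0$, which forces $t=0$ and makes the identity trivial ($p_r(0)h=p_0(0)h=a_0 h$), and the edge case where some $a_u$ vanish, in which the relations $t^u=T^u\sum_{s\geqslant u}\lambda_s$ are still valid for those indices by inspection of the lemma's construction. I do not expect any real obstacle: the corollary is essentially a restatement of the lemma once one notices that its weights depend only on $t$, $T$ and $r$, and that multiplying the scalar monomial identities by $Q^u h$ and summing preserves the convexity structure.
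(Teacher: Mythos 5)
Your proof is correct and is essentially the argument the paper leaves implicit: the corollary is stated with no proof (``the following corollaries follow immediately''), and the point that makes it immediate is exactly the one you isolate, namely that the convex weights $\lambda_s$ depend only on $t$, $T$ and $r$, so the scalar identity lifts to $p_r(tQ)h$ by linearity after swapping the order of summation. The one soft spot is your justification of universality: for a \emph{fixed} power series the lemma's identity is a single scalar equation in the fixed numbers $a_u$, so ``matching coefficients of $a_u$'' is not a valid deduction from the lemma's statement alone (e.g.\ if some $a_u=0$ nothing is forced); you patch this by appealing to ``the lemma's construction'', which is not in this paper, but the gap closes trivially by exhibiting the weights $\lambda_s=(t/T)^s-(t/T)^{s+1}$ for $s<r$ and $\lambda_r=(t/T)^r$ and checking directly that they are non-negative, sum to $1$, and satisfy $\sum_{s=u}^r\lambda_s=(t/T)^u$, after which your displayed computation goes through verbatim.
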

	\begin{corollary}\label{cor-single-cone}
	Let $Q$ be an arbitrary square matrix of order $m$, $h\in \allgambles$ and $C\subseteq \allgambles$ a convex set, such that $h\in C$. Further let $p_r(tQ)$ be the partial sums \eqref{eq-part-poly}. If $p_s(TQ)h\in C$ for some $T>0$ and every $s\in \{ 0, \ldots, r\}$, then $p_s(tQ)h\in C$ for every $t \in [0, T]$ and every $s\in \{ 0, \ldots, r\}$.  
		
		In particular, if the above conditions hold for every $r\in \NN$ then $\sigma(tQ)h\in C$ for every $t \in [0, T]$. 
	\end{corollary}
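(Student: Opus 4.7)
My plan is to deduce this corollary directly from Corollary~\ref{cor-matrix-series-cone} together with the convexity of $C$. The first statement is essentially a one-step argument: by Corollary~\ref{cor-matrix-series-cone}, for any $t\in[0,T]$ and any $r\in\NN$, we can write
\begin{equation*}
p_r(tQ)h = \sum_{s=0}^{r}\lambda_s\, p_s(TQ)h
\end{equation*}
with $\lambda_s\geqslant 0$ and $\sum_{s=0}^{r}\lambda_s = 1$. This exhibits $p_r(tQ)h$ as a convex combination of the vectors $p_s(TQ)h$ for $s\in\{0,\ldots,r\}$. Under the hypothesis that each $p_s(TQ)h\in C$, convexity of $C$ immediately yields $p_r(tQ)h\in C$. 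The very same argument applied with $r$ replaced by any smaller index $s$ gives the slightly stronger conclusion $p_s(tQ)h\in C$ for every $s\in\{0,\ldots,r\}$ as stated.

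For the final ``in particular'' clause, the plan is to take the limit $r\to\infty$. Since the power series defining $\sigma$ converges on $[0,T]$ and the partial sums $p_r(tQ)h$ converge to $\sigma(tQ)h$, and since by the first part every $p_r(tQ)h$ lies in $C$, the limit lies in the closure of $C$. In the applications we care about (normal cones of convex polyhedra), $C$ is automatically closed, so $\sigma(tQ)h\in C$. Strictly speaking this requires $C$ to be closed; I would either state this assumption explicitly or observe that the statement is really about closed convex sets, since the lemma is only applied to normal cones in the sequel.

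The argument is essentially bookkeeping, so I do not anticipate any real obstacle. The only mildly delicate point is the closure issue just mentioned; I would handle it by noting that any convex set involved here (a normal cone, or an intersection of normal cones with the ambient polyhedron) is closed, so the limit inclusion is unproblematic.
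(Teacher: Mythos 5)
Your argument is exactly the one the paper intends: the corollary is stated as following ``immediately'' from Corollary~\ref{cor-matrix-series-cone}, i.e., $p_s(tQ)h$ is a convex combination of the $p_k(TQ)h$ and convexity of $C$ does the rest. Your remark that the limiting ``in particular'' clause additionally needs $C$ to be closed is a correct refinement the paper glosses over, and is harmless here since $C$ is always a (closed) normal cone in the sequel.
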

	The above corollary holds for every function $\sigma(tQ)$ with convergent Taylor series on the interval $[0, T]$, however, in this paper the case $\sigma(tQ) = e^{tQ}$ will only be considered. 
	Note also that the converse of the above corollary, and especially its last statement does not hold. Namely, it is quite possible that $\sigma(tQ) = e^{TQ}h\in C$, while $p_r(TQ)h\not \in C$ for some $r$, and in this case $e^{tQ}h\in C$ cannot be guaranteed for all $0<t<T$. 
	
	An approximate version of the above results holds as well. 
	\begin{theorem}\label{thm-normal-decomposition}
		Assume the notation of Corollary~\ref{cor-matrix-series-cone} with $\sigma(tQ) = e^{tQ}$. Suppose that $\varepsilon > 0$ and $T>0$ exist such that	for every $s\in \{ 0, \ldots, r\}$ we can write
		 \[ p_s(TQ)h = h^{C, s}_T + h_T^{E, s}, \] 
		 where $h, h^{C, s}_T\in C$ and $\| h_T^{E, s} \|_c \leqslant \varepsilon$. Then for every $t \in [0, T]$ and $s\in \{ 0, \ldots, r\}$ we have 
		 \[ p_s(tQ)h = h^{C, s}_t + h_t^{E, s}, \] 
		 where $h^{C, s}_t\in C$ and $\| h_t^{E, s}\|_c \leqslant \varepsilon$. 
		
		In particular, if the above conditions hold for every $r\in \NN$, then  for every $0\leqslant t \leqslant T$ it holds that
		\[ e^{tQ}h = h^{C}_t + h^E_t, \] 
		where $h^{C}_t\in C$ and $\| h^E_t\|_c\leqslant  \varepsilon$.
	\end{theorem}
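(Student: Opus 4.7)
The plan is to reduce the statement to the convex-combination representation of Corollary~\ref{cor-matrix-series-cone} and to exploit two simple facts: convex combinations of elements of $C$ remain in $C$, and $\|\cdot\|_c$ is a seminorm, so it satisfies the triangle inequality and is positively homogeneous.

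Fix $t \in [0, T]$ and $s \in \{ 0, \ldots, r \}$. Corollary~\ref{cor-matrix-series-cone} furnishes non-negative coefficients $\lambda_0, \ldots, \lambda_s$ summing to $1$ with
\[
  p_s(tQ)h = \sum_{u=0}^s \lambda_u\, p_u(TQ)h.
\]
Substituting the hypothesized splittings $p_u(TQ)h = h_T^{C, u} + h_T^{E, u}$ and setting
\[
  h_t^{C, s} := \sum_{u=0}^s \lambda_u\, h_T^{C, u}, \qquad h_t^{E, s} := \sum_{u=0}^s \lambda_u\, h_T^{E, u},
\]
immediately yields the desired decomposition $p_s(tQ)h = h_t^{C, s} + h_t^{E, s}$. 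Convexity of $C$ together with $\lambda_u \geqslant 0$ and $\sum_u \lambda_u = 1$ gives $h_t^{C, s} \in C$; the triangle inequality and positive homogeneity of $\|\cdot\|_c$ give
\[
  \|h_t^{E, s}\|_c \leqslant \sum_{u=0}^s \lambda_u\, \|h_T^{E, u}\|_c \leqslant \varepsilon \sum_{u=0}^s \lambda_u = \varepsilon.
\]

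For the final ``in particular'' statement, assume the hypothesis holds for every $r \in \NN$, so the above decomposition is available for every $s$. Since $p_s(tQ)h \to e^{tQ}h$ as $s \to \infty$, it remains to pass to the limit in the pair $(h_t^{C, s}, h_t^{E, s})$. This is the main obstacle, because $\|\cdot\|_c$ only controls $h_t^{E, s}$ up to an additive constant gamble, so the sequence need not be bounded in $\|\cdot\|$. I would circumvent this by normalising each $h_t^{E, s}$, e.g.\ so that its minimum component is zero, and absorbing the compensating multiple of $1_\states$ into $h_t^{C, s}$; this is legitimate in the intended applications, where $C$ is a normal cone containing $\pm 1_\states$ in its generating set, so that $C + \RR \cdot 1_\states \subseteq C$. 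After this normalisation $\|h_t^{E, s}\| \leqslant 2\varepsilon$, whence by compactness a subsequence converges to some $h_t^E$ with $\|h_t^E\|_c \leqslant \varepsilon$; the corresponding $h_t^{C, s} = p_s(tQ)h - h_t^{E, s}$ then converges to $h_t^C := e^{tQ}h - h_t^E$, which lies in $C$ provided $C$ is closed, yielding the required decomposition.
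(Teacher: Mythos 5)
Your argument for the finite-$s$ part is exactly the paper's proof: apply Corollary~\ref{cor-matrix-series-cone} to write $p_s(tQ)h$ as a convex combination of the $p_u(TQ)h$, regroup the hypothesized decompositions, and invoke convexity of $C$ together with sublinearity of the variational seminorm. For the final limit statement the paper merely asserts that it ``immediately follows,'' whereas you correctly identify that a limit passage is needed and that $\|\cdot\|_c$ alone does not give boundedness of $h_t^{E,s}$ in the full norm; your fix --- normalising by a multiple of $1_\states$, which may be absorbed into $h_t^{C,s}$ since the normal cones here contain $\pm 1_\states$ among their generators and are closed --- is sound and actually supplies a detail the paper leaves implicit.
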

	\begin{proof}
		Using Corollary~\ref{cor-matrix-series-cone}, we calculate 
		\[ p_s(tQ)h = \sum_{k = 0}^s \lambda_k p_k(TQ)h = \sum_{k = 0}^s \lambda_k (h^{C, k}_T + h_T^{E, k}) = h^{C, s}_t + h_t^{E, s}, \]
		where $h^{C, s}_t := \sum_{k = 0}^s \lambda_k h^{C, k}_T$ and $h^{E, s}_t := \sum_{k = 0}^s \lambda_k h^{E, k}_T$. 
		Clearly, $h^{C, s}_t = \sum_{k = 0}^s \lambda_k h^{C, k}_T \in C$ and $\| h_t^{E, s} \| = \left\| \sum_{k = 0}^s \lambda_k h_T^{E, k} \right\|_c \leqslant   \sum_{k = 0}^s \lambda_k\| h_T^{E, k} \|_c \leqslant \varepsilon$, where the last inequality is implied by sublinearity of the variational seminorm. 
		The last statement immediately follows. 
	\end{proof}
	
	\subsection{Checking the normal cone inclusion}
	In \cite{skulj:15AMC}, verification whether some $p_r(tQ)$ belongs to a normal cone $C$ was implemented through the application of linear programming, which is computationally costly. Here we propose a procedure that vastly reduces the number of linear programming routines that need to be executed and replace them with faster matrix methods. Notice again that in the case a normal cone contains a subset of $\gambleset$ that is not linearly independent, a subset generated by an independent subset is only considered. 
	
	Let $M_J$ denote the matrix whose columns are $f_i$ for $i\in J\subset I$ and $1_\states$ as the first column. Here $J$ stands for any $I_{h, k}$. Equation \eqref{eq-nc-decomposition} is equivalent to $M_J\low{\alpha} = h$, where $\low{\alpha}$ denotes the vector of the coefficients $\alpha(i)$ for $i\in J\cup\{ 0 \}$. Now we write $\alpha(i)$ instead of $\alpha_i$ to avoid multiple indices. Due to the assumed linear independence, $M_J$ is reversible and we have that $\low{\alpha}= M_J^{-1}h$. 
	
	Let $\low{\alpha}^0$ be the vector of coefficients such that $M_J\low{\alpha}^0 = h_0$ and $p_r(t)$ be the $r$-th partial sums for some power series. Further, let $\low{\alpha}_r^t$ be such that $M_J\low{\alpha}_r^t = p_r(tQ)h_0$. It is a matter of basic matrix algebra to prove that 
	\begin{equation}\label{eq-Qj}
		\low{\alpha}_r^t = p_r(tM_J^{-1}QM_J)\low{\alpha}_0 = p_r(tQ_J)\low{\alpha}_0.
	\end{equation}
	That is $Q_J := M_J^{-1}QM_J$ is the matrix corresponding to $Q$ in the basis $\{ 1_\states \}\cup\gambleset_{J} $. The vector $p_r(tQ)h_0$ is in the cone $C_J$ iff $\low{\alpha}_r^t$ has all components, except possibly for the first one, non-negative. To avoid unnecessary calculations, one should first check whether $e^{TQ_J}\low{\alpha}^0$ satisfies these requirements. 
	\begin{proposition}
		Let $\low{\alpha}_0$ be a $m$-tuple and $Q_J$ a square matrix defined above. Denote $\low{\alpha}^t_r=p_r(tQ_J)$, where $p_r(t)$ are the $r$-th partial sum polynomials for the Taylor series of the exponential function. Suppose that $\alpha_{r}^T(i)\geqslant 0$ for every $r\geqslant 0$ and $i\in J$. Then $\alpha_{\infty}^t(i)\geqslant 0$ for every $i\in J$ and $0\leqslant t\leqslant T$, where $\low{\alpha}_\infty^t = e^{tQ_J}\low{\alpha}_0$. 
	\end{proposition}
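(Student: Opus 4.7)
The plan is to recognize this proposition as a direct consequence of Corollary~\ref{cor-single-cone}. First, I would define the closed convex set
\[ C = \{ \alpha\in \RR^m : \alpha(i) \geqslant 0 \text{ for all } i\in J \}, \]
which is in fact a convex cone, being the intersection of the half-spaces $\{\alpha : \alpha(i)\geqslant 0\}$ for $i\in J$. Membership in $C$ is preserved under convex combinations and, crucially, under limits, since $C$ is defined by non-strict linear inequalities.

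Next, I would translate the hypothesis into a membership statement for $C$. The assumption that $\alpha_r^T(i)\geqslant 0$ for every $r\geqslant 0$ and every $i\in J$ is precisely the statement that $p_r(TQ_J)\low{\alpha}_0\in C$ for every $r\geqslant 0$. Taking $r=0$ gives $\low{\alpha}_0 = p_0(TQ_J)\low{\alpha}_0 \in C$, which supplies the initial-membership requirement $h\in C$ needed to apply Corollary~\ref{cor-single-cone}.

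Then I would invoke the final clause of Corollary~\ref{cor-single-cone} with $\sigma(tQ) = e^{tQ}$, matrix $Q = Q_J$, initial vector $h = \low{\alpha}_0$, and the convex set $C$ defined above. This directly yields $e^{tQ_J}\low{\alpha}_0 \in C$ for every $t\in [0,T]$. By the definition $\low{\alpha}_\infty^t = e^{tQ_J}\low{\alpha}_0$, this membership is exactly the desired conclusion that $\alpha_\infty^t(i)\geqslant 0$ for every $i\in J$ and every $0\leqslant t\leqslant T$.

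I do not expect a substantive obstacle, since the proposition is essentially a specialization of Corollary~\ref{cor-single-cone} to the particular convex set of coordinate-nonnegative vectors. The only subtleties worth noting are that $C$ is closed (so that the passage from partial sums to the exponential limit preserves membership) and that the hypothesis $h\in C$ is automatically contained in the $r=0$ case of the assumption, so no separate argument is needed for the initial condition.
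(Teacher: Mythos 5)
Your proof is correct and follows exactly the paper's route: the paper's own proof consists of the single sentence that the proposition is a direct application of Corollary~\ref{cor-single-cone}, and you have simply spelled out that application by identifying the convex set $C$ of coordinate-nonnegative vectors and verifying the hypotheses. No discrepancy to report.
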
 
	\begin{proof}
		The proposition is a direct application of Corollary~\ref{cor-single-cone}. 
	\end{proof}
	The above proposition provides a directly applicable criterion for checking whether the solution of \eqref{eq-BSDE} on some interval is entirely contained in the same normal cone. If the inclusion holds for all normal cones corresponding to rows $\qset_k$, then the exact solution of  \eqref{eq-BSDE} is obtained as $h_T = e^{TQ}h_0$. 
	
	\subsection{Approximate matrix exponential method}\label{ss-amem}
	The solution using the exponential method might sometimes not satisfy the conditions of the previous subsection exactly and can thus for a particular interval partially lie outside the starting normal cone, yet the distance to it might be small enough to ensure that the error is within required bounds. In this subsection we give a theoretical basis for such a use. 
	
	Let $J = I_{h, k}$ for some row index $k$, an initial vector $h$ be given and denote by $C$ the normal cone $N_{\mathcal Q_k}(Q_k)$. Let $h_t^r = M_J \low\alpha_r^t$, where $\low\alpha^t_r$ are as in the previous subsection. We decompose $\low\alpha^t_r$ into $(\low\alpha_r^t)^+$, which is the vector of its positive components and $(\low\alpha_r^t)^+(0) = \low\alpha_r^t(0)$ and $(\low\alpha_r^t)^-$ containing the absolute values of the negative components except for $(\low\alpha_r^t)^-(0) = 0$. We have that $\low\alpha_r^t = (\low\alpha_r^t)^+ - (\low\alpha_r^t)^-$. Hence $h_t^r = h^{C, r}_t + h^{E, r}_t$, where 
	$h^{C, r}_t = M_J (\low\alpha_r^t)^+$ and $h^{E, r}_t = -M_J (\low\alpha_r^t)^-$. Clearly, $h^{C, r}_t \in C$. 
	\begin{theorem}\label{thm-cone-error-eps}
		We assume the notation used above. 
		Let $h\in \RR^m$ and $Q\in \qset$ be given such that $Qh = \low Q h$. Further, suppose that $\| h^{E, s}_T \|_c = \| -M_J (\low\alpha_s^T)^- \|_c \leqslant \varepsilon$ for some $T>0, \varepsilon > 0$ and all $J\in \{ I_{h, k}\colon 1\leqslant k \leqslant m \}$ and $s\in \{ 0, \ldots, r\}$. Then the inequality 
		\[ \left\| Q[h_t^s] - \low Q[h_t^s]\right\| \leqslant \iota(\qset)\varepsilon \] 
		holds for every $t \in [0, T]$ and $s\in \{ 0, \ldots, r\}$, where $p_s(tQ)$ denote the partial sums for the exponential Taylor series. 
		
		In particular, if the above conditions hold for every $r\in \NN$, then 
		\[ \left\| Q[e^{tQ}h] - \low Q[e^{tQ}h]\right\| \leqslant \iota(\qset)\varepsilon. \] 
	\end{theorem}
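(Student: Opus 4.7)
The plan is to combine two ingredients already in the paper: the time-propagation result of Theorem~\ref{thm-normal-decomposition}, which transports the cone decomposition from the endpoint $T$ to every intermediate $t$, and a rowwise analogue of the proposition immediately preceding this subsection (the one bounding $\|Qh-\low Qh\|$ by $\iota(\qset)\|h_e\|_c$ when $h=h_n+h_e$ with $h_n$ in the normal cone). The essential observation is that each row $k$ carries its own normal cone $N_{\qset_k}(Q_k)$, its own basis $J=I_{h,k}$, and hence its own decomposition $h^s_t = h^{C,s,k}_t + h^{E,s,k}_t$, so everything will have to be done index-by-index and then assembled via the maximum defining $\|\cdot\|$.

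Concretely, first I would fix a row index $k$ and set $C=N_{\qset_k}(Q_k)$ and $J=I_{h,k}$. The hypothesis gives $p_s(TQ)h = h^{C,s,k}_T + h^{E,s,k}_T$ with $h^{C,s,k}_T\in C$ and $\|h^{E,s,k}_T\|_c \leqslant \varepsilon$ for every $s\in\{0,\dots,r\}$. Invoking Theorem~\ref{thm-normal-decomposition} with this $C$ and $\varepsilon$ yields, for every $t\in[0,T]$ and every $s\in\{0,\dots,r\}$, a decomposition $h^s_t := p_s(tQ)h = h^{C,s,k}_t + h^{E,s,k}_t$ with $h^{C,s,k}_t\in C$ and $\|h^{E,s,k}_t\|_c\leqslant \varepsilon$. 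Because $h^{C,s,k}_t$ lies in the normal cone of the $k$-th row, we have the crucial identity $[Q h^{C,s,k}_t]_k = [\low Q h^{C,s,k}_t]_k$.

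Next, I would estimate the $k$-th component of $Qh^s_t - \low Qh^s_t$ using superadditivity of $\low Q$ and the dual subadditivity of $\up Q$, mimicking the proof of the preceding proposition but applied to the single row. Writing $Q_k(h^s_t) = Q_k(h^{C,s,k}_t)+Q_k(h^{E,s,k}_t)$ and using $\low Q_k(h^{C,s,k}_t)+\low Q_k(h^{E,s,k}_t)\leqslant \low Q_k(h^s_t)\leqslant \low Q_k(h^{C,s,k}_t)+\up Q_k(h^{E,s,k}_t)$, the cone identity cancels the $C$-part and leaves
\begin{equation*}
\bigl|[Qh^s_t]_k - [\low Q h^s_t]_k\bigr| \leqslant \max\bigl\{|Q_k(h^{E,s,k}_t)-\low Q_k(h^{E,s,k}_t)|,\ |\up Q_k(h^{E,s,k}_t)-Q_k(h^{E,s,k}_t)|\bigr\}.
\end{equation*}
Each term on the right is of the form $|(Q - Q')_k(h^{E,s,k}_t)|$ for some $Q'\in\qset$, hence bounded by $\|Q-Q'\|\,\|h^{E,s,k}_t\|_c \leqslant \iota(\qset)\,\|h^{E,s,k}_t\|_c \leqslant \iota(\qset)\varepsilon$ via Proposition~\ref{prop-q-matrix-variational} (which applies because the difference of two $Q$-matrices has row sums zero, so the same variational-seminorm bound is valid). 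Taking the maximum over $k$ gives the claimed inequality.

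The main obstacle I expect is conceptual rather than computational: keeping the bookkeeping straight, since the decomposition $h^{C,s,k}_t + h^{E,s,k}_t$ genuinely depends on $k$ (different bases $I_{h,k}$ per row), so one cannot directly quote the preceding proposition in its global matrix form; one must re-do its estimate coordinate by coordinate. The final statement (the $r=\infty$ case) then follows by letting $r\to\infty$: partial sums converge to $e^{tQ}h$, $\low Q$ and $Q$ are continuous, and the uniform bound $\iota(\qset)\varepsilon$ passes to the limit.
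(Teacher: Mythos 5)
Your proposal is correct and follows essentially the same route as the paper's proof: transport the endpoint decomposition to every $t\in[0,T]$ via Theorem~\ref{thm-normal-decomposition}, cancel the cone part using $Q_k(h^{C,s}_t)=\low Q_k(h^{C,s}_t)$ together with superadditivity of $\low Q_k$, bound the residual by $\iota(\qset)\|h^{E,s}_t\|_c\leqslant\iota(\qset)\varepsilon$, and take the maximum over rows. Your explicit tracking of the row-dependence of the decomposition (and the two-sided bound via $\up Q_k$, which is not strictly needed since $Q_k(h^s_t)-\low Q_k(h^s_t)\geqslant 0$) is if anything slightly more careful than the paper's write-up.
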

	\begin{proof}
		Let $C = N_{\qset_k}(Q_k)$ and denote $h_t^s = p_s(tQ)h$. By the assumption, $h_T^{C, s}\in C$ and $h_T^{E, s}$ with $\| h_T^{E, s}\|_c \leqslant \varepsilon$ exist for every $s\in \{ 0, \ldots, r\}$ such that $h_T^s = h_T^{C, s} + h_T^{E, s}$. Hence, by Theorem~\ref{thm-normal-decomposition}, $h_t^{C,s}\in C$ and $h_t^{E,s}$ with $\| h_t^{E,s}\|_c \leqslant \varepsilon$ exist for every $t \in [0, T]$ and every $s\in \{ 0, \ldots, r\}$ such that $h_t^s = h_t^{C,s} + h_t^{E, s}$. 
		
		By the superadditivity of $\low Q_k$, we have that $\low Q_k (h_t^s) \geqslant \low Q_k(h_t^{C,s}) + \low Q_k(h_t^{E, s})$. Using additivity of $Q_k$, we can write
		\begin{align*}
			\| Q_k (h_t^s) - \low Q_k (h_t^s) \| & \leqslant \| Q_k(h_t^{C,s}) + Q_k(h_t^{E, s}) - \low Q_k(h_t^{C,s}) - \low Q_k(h_t^{E, s}) \| \\
			& = \| Q_k(h_t^{E, s}) - \low Q_k(h_t^{E, s}) \| \\
			& \leqslant \iota(\qset_k) \| h_t^{E, s} \|_c\leqslant \iota(\qset) \varepsilon
		\end{align*}
		for every $k\in \{ 1, \ldots, m\}, s\in \{ 0, \ldots, r\}$ and $t\in [0, T]$. It follows directly that
		\begin{equation}
			\| Q (h_t^s) - \low Q (h_t^s) \| = \max_{1\leqslant k\leqslant m} \| Q_k (h_t^s) - \low Q_k (h_t^s) \| \leqslant \iota(\qset) \varepsilon,
		\end{equation}
		which completes the proof. 
	\end{proof}
	The above proposition provides a base for the use of the matrix exponential approximation in the case the solution on an interval is nearly contained in the same normal cone. 

	\subsection{Grid methods}
	In the case where the error produced by the matrix exponential method exceeds the threshold, one can resort to the so-called uniform grid method. Our assumption is this approach would merely be needed on some isolated intervals where the solution $h_t$ rapidly transits between normal cones, not allowing to use the same minimizing $Q$-matrices for a sufficiently long interval. In fact, by allowing the approximate matrix exponential method in addition to the exact version, our testing showed that most often the use of uniform approach is not needed. Nevertheless, it is the most often described technique in the literature. 
	
	All grid methods divide the interval $[0, T]$ into subintervals $[t_i, t_{i+1}]$, where $0=t_0 < t_1 < \dots < t_n = T$. Then the solutions of equation \eqref{eq-BSDE} are approximated on the individual intervals. The widths of those subintervals are chosen so that the total error remains within required bounds. We will turn back to the error estimation later. Now we suppose the intervals are of the appropriate widths, either uniform or adaptive. Then still two distinct variations of the method are implemented. The first one was first proposed in our earlier paper \cite{skulj:15AMC} and approximates the solution at time $t_{k+1}$ given the one at time $t_k$ as $\hat h_{t_{k+1}}=e^{(t_{k+1}-t_k)Q_k}\hat h_{t_k}$, where the matrix $Q_k$ is such that $Q_kh_{t_k} = \low Qh_{t_k}$. The approach proposed by \cite{krak2017imprecise, erreygers2017imprecise} calculates the new solution as $\hat h_{t_{k+1}} = \left(I+(t_{k+1}-t_k)Q_k \right)\hat h_{t_k}$, using the same way to find the matrix $Q_k$. The latter approach is in fact an approximate version of the former one using the first order Taylor polynomial approximation. The advantage of the first approach is in that the approximate solution $\hat h_t$ does satisfy the differential equation $\frac{\dd}{\dd t}\hat h_t = Q_t h_t$, at every time $t$ for some $Q_t\in \qset$, which in turn ensures that $\hat h_t \geqslant h_t$, where $h_t$ is the true solution. The advantage of the second method is in its computational simplicity, which makes it faster to apply. As we will see later, the error generated by the use of both methods is virtually identical. 

	\section{Error estimation}\label{s-ee}
	In this section we estimate the maximal possible error of the approximation $\hat h_t$ of the exact solution $h_t$ of equation \eqref{eq-BSDE}, employing one of the described methods. We will assume that $\hat h_t \geqslant h_t$ and that it satisfies the equation 
	\begin{equation}\label{eq-approximate-de}
		\frac{\dd h_t}{\dd t} = Q_t h_t,
	\end{equation}
	where $Q_t\colon [0, T]\to \qset$ is some piecewise constant map. We require this property for the sake of simplicity and because actually all the described methods indeed produce such functions. In fact, as far as polyhedral sets of $Q$-matrices are concerned, this property indeed holds, as the matrix minimizing the expression $Qh$ is constant as long as $h$ remains in its normal cone. Note however, that the grid method using the linear approximation $\hat h_{t_{k+1}} = \left(I+(t_{k+1}-t_k)Q_k \right)\hat h_{t_k}$ does not necessarily satisfy equation \eqref{eq-approximate-de}, yet it turns out that the error produced is of similar magnitude.  
	
	\subsection{General error bounds}
	Denote by $P_{\Delta t}$ the linear operator mapping $h$ to the solution of the differential equation \eqref{eq-BSDE} at time $t+\Delta t$ with the initial value at $h_t = h$. We can write $P_{\Delta t}h_t = h_{t+\Delta t}$. Moreover, we will denote by $\hat P_{\Delta t}$ the operator that maps $h_t$ to the approximation $\hat h_{t+\Delta t}$. 
	\begin{proposition}
		$\| P_{\Delta t} \| \leqslant 1$ and $\| \hat P_{\Delta t} \| \leqslant 1$.
	\end{proposition}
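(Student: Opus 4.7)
The strategy is to verify that each of $P_{\Delta t}$ and $\hat P_{\Delta t}$ (a) preserves constant gambles, and (b) is componentwise order preserving; a squeeze between the constants $\pm \Vert h\Vert 1_\states$ then delivers the bound in one line.

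For the approximate operator $\hat P_{\Delta t}$, which is linear, every grid method described in Section~\ref{s-nmbc} produces $\hat P_{\Delta t}$ as a composition of finitely many factors of the form $e^{s Q^{(i)}}$ (matrix exponential variant) or $I + s Q^{(i)}$ (first-order variant, for $s$ small enough that $1 + s Q^{(i)}_{kk}\geqslant 0$), with $Q^{(i)}\in \qset$ and weights summing to $\Delta t$. Each such factor is a (sub)stochastic matrix: its rows sum to $1$ and all entries are non-negative, so its row-sum norm is at most $1$. Submultiplicativity of $\Vert\cdot\Vert$ on compositions yields $\Vert \hat P_{\Delta t}\Vert \leqslant 1$.

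For the exact operator $P_{\Delta t}$, constant preservation follows immediately from property (i) of Section~\ref{ss-itrm}: $\low Q(\lambda 1_\states)=\mathbf 0$, so the constant path $h_t\equiv \lambda 1_\states$ solves \eqref{eq-BSDE}, and by uniqueness $P_{\Delta t}(\lambda 1_\states)=\lambda 1_\states$. For monotonicity I would rely on the quasi-monotonicity of $\low Q$: if $f\leqslant g$ with $f_k = g_k$ for some index $k$, then by separately specified rows there exists $Q^\ast\in\qset$ with $[\low Q g]_k = [Q^\ast g]_k$, and consequently
\[
[\low Q f]_k \leqslant [Q^\ast f]_k = Q^\ast_{kk} f_k + \sum_{j\neq k} Q^\ast_{kj} f_j \leqslant Q^\ast_{kk} g_k + \sum_{j\neq k} Q^\ast_{kj} g_j = [\low Q g]_k,
\]
since $f_k = g_k$ and $Q^\ast_{kj}\geqslant 0$ for $j\neq k$ by property (ii). A standard ODE comparison argument propagates this pointwise quasi-monotonicity to trajectorial monotonicity $P_{\Delta t} f\leqslant P_{\Delta t} g$. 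Combining (a) and (b) with $-\Vert h\Vert 1_\states \leqslant h\leqslant \Vert h\Vert 1_\states$ yields $-\Vert h\Vert 1_\states \leqslant P_{\Delta t} h\leqslant \Vert h\Vert 1_\states$, and hence $\Vert P_{\Delta t} h\Vert \leqslant \Vert h\Vert$.

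The main obstacle will be upgrading the infinitesimal quasi-monotonicity of $\low Q$ to trajectorial monotonicity of the nonlinear semigroup $P_{\Delta t}$. The cleanest route is simply to invoke the order-preserving property of lower transition operators already established in the imprecise Markov chain literature \cite{krak2017imprecise,de2017limit}; a self-contained proof can be obtained by a standard strict-inequality perturbation, replacing $g$ by $g+\varepsilon 1_\states$, showing that trajectories then cannot meet (at a first crossing time the computation above would give strict inequality of derivatives), and letting $\varepsilon\downarrow 0$.
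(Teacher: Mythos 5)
Your proposal is correct, but it is considerably more self-contained than the paper's own proof, which consists of a single sentence of citation: $P_{\Delta t}$ is a lower transition operator, whose norm is known from the literature to be bounded by $1$, and $\hat P_{\Delta t}$ is a precise transition operator, hence also of norm at most $1$ (the paper has already recorded in Section~\ref{s-nqm} that $\Vert P\Vert=1$ for stochastic matrices and $\Vert e^{Q}\Vert=1$ for $Q$-matrices). Your factorization of $\hat P_{\Delta t}$ into stochastic or substochastic factors with submultiplicativity is exactly the content behind that clause, and your explicit caveat that the first-order factors $I+sQ$ need $1+sQ_{kk}\geqslant 0$ makes precise a condition the paper leaves implicit. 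Your argument for $P_{\Delta t}$ --- constant preservation, monotonicity, and the squeeze $-\Vert h\Vert 1_\states\leqslant h\leqslant\Vert h\Vert 1_\states$ --- is the standard proof of the fact the paper imports, so in substance you are reproving the cited lemma; what this buys is independence from the external reference, at the cost of the one genuinely delicate step you correctly identify. On that step, one warning: perturbing only the initial condition $g\mapsto g+\varepsilon 1_\states$ does \emph{not} yield strict inequality of derivatives at a first crossing time, since when the $k$-th components agree the quasi-monotonicity computation still gives only $[\low Q f]_k\leqslant[\low Q g]_k$, so no contradiction arises from that sketch alone; the standard repairs are to perturb the dynamics (compare against the solution of $\dot g_t=\low Q g_t+\varepsilon 1_\states$) or to invoke a Gronwall/M\"uller--Kamke comparison theorem. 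Your primary suggestion of simply citing the order-preservation of lower transition operators is exactly the paper's route and is the cleanest option. Note also that the paper later needs the stronger Lipschitz property $\Vert P_{\Delta t}\hat h-P_{\Delta t}h\Vert\leqslant\Vert\hat h-h\Vert$ (Proposition~\ref{prop-error-carry-on}), which does not follow from the homogeneous bound $\Vert P_{\Delta t}h\Vert\leqslant\Vert h\Vert$ you prove here and is derived separately from super- and subadditivity.
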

	\begin{proof}
		$P_{\Delta t}$ is a lower transition operator, known to have the norm bounded by 1 (see e.g. \cite{krak2017imprecise}), and $\hat P_{\Delta t}$ is a precise transition operator and therefore also has norm bounded by 1. 
	\end{proof}
	Denote by $E_t$ the error of an approximation $\hat h_t$. Thus, $E_t = \| \hat h_t - h_t \|$. Our goal is to estimate $E_t$, and prescribe the optimal method of calculation of $\hat h_t$ that ensures $E_T \leqslant E$, where $E$ is a given maximal allowed error. 
	\begin{proposition}\label{prop-error-carry-on}
		Let $\hat h_t$ be an approximate to the minimal solution $h_t$ of equation \eqref{eq-BSDE} such that $\hat h_t \geqslant h_t$. Moreover, let $\tilde h_{t+\Delta t}$  be the minimal solution of equation \eqref{eq-BSDE} with the initial value in $t$ taken to be the approximate value $\hat h_t$. Then $\| \tilde h_{t+\Delta t}- h_{t+\Delta t} \| = \| P_{\Delta t}\hat h_t - P_{\Delta t}h_t \| \leqslant \| \hat h_t - h_t \|$. 
	\end{proposition}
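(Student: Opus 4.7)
The first equality is immediate from the definition of $P_{\Delta t}$: taking the initial value at time $t$ to be $\hat h_t$ in \eqref{eq-BSDE} produces $\tilde h_{t+\Delta t} = P_{\Delta t}\hat h_t$, while taking it to be $h_t$ produces $h_{t+\Delta t} = P_{\Delta t}h_t$. So the whole content of the proposition reduces to the non-expansiveness inequality $\|P_{\Delta t}\hat h_t - P_{\Delta t}h_t\| \leqslant \|\hat h_t - h_t\|$.

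The cleanest route is to invoke the preceding proposition, which states $\|P_{\Delta t}\| \leqslant 1$. For lower transition operators, the operator norm is understood as the Lipschitz constant with respect to the maximum norm \eqref{eq-l1}, so this bound \emph{is} the non-expansiveness we need. If one prefers to argue this directly without appealing to the black-boxed bound, I would proceed as follows: for each starting vector $u$, the solution $P_{\Delta t}u$ of \eqref{eq-BSDE} is produced by some measurable selection $s\mapsto Q_s^u \in \qset$ for $s\in[t, t+\Delta t]$ that realizes the pointwise minimum along the trajectory, i.e.\ $\frac{\dd}{\dd s}P_{s-t}u = Q_s^u P_{s-t}u$. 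The selection associated with the initial value $h_t$ yields a precise (time-inhomogeneous) transition operator $\Pi$ such that $\Pi h_t = P_{\Delta t} h_t$ and, by the very definition of the lower transition operator as a pointwise infimum, $\Pi \hat h_t \geqslant P_{\Delta t}\hat h_t$. Symmetrically, the selection for $\hat h_t$ gives a transition operator $\hat\Pi$ with $\hat \Pi \hat h_t = P_{\Delta t}\hat h_t$ and $\hat \Pi h_t \geqslant P_{\Delta t} h_t$.

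Chaining these inequalities gives
\begin{equation*}
P_{\Delta t}\hat h_t - P_{\Delta t}h_t \leqslant \hat\Pi\hat h_t - \hat\Pi h_t = \hat\Pi(\hat h_t - h_t),
\end{equation*}
and analogously $P_{\Delta t}h_t - P_{\Delta t}\hat h_t \leqslant \Pi(h_t - \hat h_t)$. Since $\hat\Pi$ and $\Pi$ are (products/compositions of) stochastic transition operators, they have operator norm $1$ in the maximum norm, so componentwise $|P_{\Delta t}\hat h_t - P_{\Delta t}h_t| \leqslant \max(\hat\Pi|\hat h_t - h_t|,\,\Pi|\hat h_t - h_t|) \leqslant \|\hat h_t - h_t\|\,\mathbf 1$, giving the required inequality.

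The step I expect to require the most care is justifying the existence of the pointwise-minimizing selection $Q_s^u$ and the sandwich $\hat\Pi \hat h_t = P_{\Delta t}\hat h_t$, $\hat\Pi h_t \geqslant P_{\Delta t} h_t$; this relies on the separately-specified-rows property plus the fact that the lower operator on $\qset$ is realized by a single matrix at each gamble (so the selection construction at each infinitesimal step is legitimate). If instead we accept the previous proposition's interpretation of $\|P_{\Delta t}\|\leqslant 1$ as a Lipschitz bound, then the proof collapses to a one-line citation of that fact after identifying the two sides of the claimed equality.
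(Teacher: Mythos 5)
Your proof is correct in substance but follows a genuinely different route from the paper's. The paper argues via the conjugate upper transition operator $\tilde P_{\Delta t}$: superadditivity gives $P_{\Delta t}\hat h_t \leqslant P_{\Delta t}h_t + \tilde P_{\Delta t}(\hat h_t - h_t)$, monotonicity (from $\hat h_t \geqslant h_t$) gives $P_{\Delta t}\hat h_t \geqslant P_{\Delta t}h_t$, and the two-sided sandwich $0 \leqslant P_{\Delta t}\hat h_t - P_{\Delta t}h_t \leqslant \tilde P_{\Delta t}(\hat h_t - h_t)$ plus $\|\tilde P_{\Delta t}\| = 1$ finishes it. You instead sandwich with \emph{precise} time-inhomogeneous transition operators obtained from measurable selections realizing the minimum along each trajectory; this buys a more self-contained, envelope-theoretic picture, but at the cost of importing the nontrivial fact that $P_{\Delta t}$ is the lower envelope of the operators induced by such selections (a result the paper never states and which carries its own measurability burden) -- the paper's superadditivity inequality packages exactly this content in one line. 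Two cautions. First, your displayed inequality has the roles of $\Pi$ and $\hat\Pi$ swapped: since $P_{\Delta t}\hat h_t \leqslant \Pi \hat h_t$ and $P_{\Delta t}h_t = \Pi h_t$, the correct bound is $P_{\Delta t}\hat h_t - P_{\Delta t}h_t \leqslant \Pi(\hat h_t - h_t)$, not $\hat\Pi(\hat h_t - h_t)$ (and symmetrically for the other direction); the conclusion survives because both operators are stochastic with norm $1$, but as written the chain is backwards. Second, your ``one-line'' fallback of citing the preceding proposition's $\|P_{\Delta t}\| \leqslant 1$ as a Lipschitz bound over-reads that statement: for a nonlinear operator the stated norm bound does not by itself yield non-expansiveness, which is precisely why the paper bothers with the superadditivity argument rather than a one-line citation.
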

	\begin{proof}
		The operator $P_{\Delta t}$ is a lower transition operator. Now let $\tilde P_{\Delta t}$ be its corresponding upper transition operator. It is a well-known property of superlinear operators that $P_{\Delta t}(f + g) \leqslant P_{\Delta t}f + \tilde P_{\Delta t}g$, whence $P_{\Delta t}\hat h \leqslant P_{\Delta t}h + \tilde P_{\Delta t}(\hat h-h)$. Moreover, since  $P_{\Delta t}h_t \leqslant P_{\Delta t}\hat h_t$, it follows that  $\| P_{\Delta t}\hat h_t - P_{\Delta t}h_t \| \leqslant \| \tilde P_{\Delta t} \| \| \hat h_t - h_t \| \leqslant \| \hat h_t - h_t \|$, using $\| \tilde P_{\Delta t} \| = 1$.
				
	\end{proof}
	Using an approximation method, the obtained estimate at time $t+\Delta t$ is not $\tilde h_{t+\Delta t}$ but instead an approximation $\hat h_{t+\Delta t}$, which in addition to the error $E_t$ contains an additional error due to the approximation method used. Let $\metherr_{\Delta t}$ denote the error of the method on the interval $\Delta t$. That is, $\metherr_{\Delta t} = \| \hat P_{\Delta t} \hat h_t - P_{\Delta t} \hat h_t \|$. The following proposition holds. 	 
	\begin{proposition}\label{prop-eror-method-general}
		Let $E_t = \| \hat h_t - h_t\|$ for every $t\in [0, T]$ and let $\metherr_{\Delta t}$ denote the error produced by an approximation method on an interval of width $\Delta t$. Then $E_{t+\Delta t} \leqslant E_t + \metherr_{\Delta t}$, which we can rewrite into $\Delta E_t  \leqslant \metherr_{\Delta t}$. 
	\end{proposition}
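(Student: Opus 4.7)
The proof is a textbook triangle-inequality argument, with the intermediate quantity being precisely the auxiliary $\tilde h_{t+\Delta t}$ from Proposition~\ref{prop-error-carry-on}. First I would unwind the definitions: $E_{t+\Delta t} = \| \hat h_{t+\Delta t} - h_{t+\Delta t}\| = \| \hat P_{\Delta t}\hat h_t - P_{\Delta t} h_t\|$, where the first equality is the definition of $E_{t+\Delta t}$ and the second uses that $\hat h_{t+\Delta t}$ arises from applying the one-step approximation operator to the current approximation $\hat h_t$, while $h_{t+\Delta t}$ is the exact image of $h_t$ under the true lower-transition operator.

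Next I would insert the hybrid term $P_{\Delta t}\hat h_t$ — namely, what we would have obtained by propagating $\hat h_t$ exactly rather than approximately — and split via the triangle inequality:
\begin{equation*}
  \| \hat P_{\Delta t}\hat h_t - P_{\Delta t} h_t \| \;\leqslant\; \| \hat P_{\Delta t}\hat h_t - P_{\Delta t}\hat h_t \| + \| P_{\Delta t}\hat h_t - P_{\Delta t} h_t \|.
\end{equation*}
The first summand is by definition exactly the method error $\metherr_{\Delta t}$. The second summand, by Proposition~\ref{prop-error-carry-on} (which is why that proposition was set up in the form $\|P_{\Delta t}\hat h_t - P_{\Delta t} h_t\| \leqslant \| \hat h_t - h_t\|$), is bounded by $\|\hat h_t - h_t\| = E_t$.

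Combining these two bounds yields $E_{t+\Delta t} \leqslant \metherr_{\Delta t} + E_t$, which is equivalent to $\Delta E_t \leqslant \metherr_{\Delta t}$, as claimed. There is no genuine obstacle here; the only subtlety worth flagging is the monotonicity assumption $\hat h_t \geqslant h_t$, which is needed only insofar as it is required by Proposition~\ref{prop-error-carry-on} in order to control the second summand by the operator-norm bound $\|\tilde P_{\Delta t}\|\leqslant 1$ on the conjugate upper transition operator. Once that hypothesis has been carried through from the standing assumptions of the section, the present proposition follows in one line of triangle inequality plus one invocation of the preceding proposition.
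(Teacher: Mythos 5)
Your proof is correct and follows essentially the same route as the paper: insert the hybrid term $P_{\Delta t}\hat h_t$, apply the triangle inequality, identify the first summand as $\metherr_{\Delta t}$, and bound the second by $E_t$ (the paper does this inline via $\| P_{\Delta t}\| = 1$ rather than by citing Proposition~\ref{prop-error-carry-on}, but that is the same estimate). Your remark about where the hypothesis $\hat h_t \geqslant h_t$ enters is a fair and slightly more careful reading than the paper's own one-line justification.
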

	\begin{proof}
		We have 
		\begin{align*}
			E_{t+\Delta t} & = \| \hat h_{t+\Delta t} - h_{t+\Delta t} \| \\
			 & \leqslant \| \hat P_{\Delta t} \hat h_t - P_{\Delta t}h_t \| \\
			 & \leqslant \| \hat P_{\Delta t} \hat h_t - P_{\Delta t}\hat h_t \|  +  \| P_{\Delta t} \hat h_t - P_{\Delta t}h_t \| \\
			 & \leqslant \metherr_{\Delta t} + \|  P_{\Delta t} \| \| \hat h_t - h_t \| \\
			 & \leqslant \metherr_{\Delta t} + E_t,
		\end{align*}
		where we used $\|  P_{\Delta t} \| = 1$.
	\end{proof}
	The above proposition could be interpreted as an estimate of the total error that results from the error in initial solution $E_t$ and the error of the method $\metherr_{\Delta t}$. 
	
	\subsection{Error estimation for a single step}
	Within a single approximation step we calculate the solution $\hat h_{t+\Delta t}$ based on the approximation $\hat h_t$. For the purpose of error estimation, we will set $t=0$ and $\Delta t = T$. Moreover, we will assume the initial solution is exact, because otherwise, the initial error is merely added to the error of the method as shown in the previous section. Thus, the initial value is set to $h_0 = h$. 
	
	Now assume we have the estimation of the form $\hat h_t = e^{tQ}h$ for $t\in [0, T]$, where $Q h = \low Q h$. Our goal is to bound the norm of the difference $\hat h_{T}-h_T$, where $h_T$ is the exact solution of equation \eqref{eq-BSDE} with initial condition $h_0 = h$. 
	
	Let us introduce some more notation. Let $E_t = \| \hat h_t - h_t\|$ represent the error of the approximation. By definitions, we have that $\dd\hat h_t = Q\hat h_t\dd t$ and $\dd h_t = \low Qh_t \dd t$. Also recall the notation introduced in Section~\ref{ss-amem}. 
	\begin{theorem}\label{thm-expm-error}
		Let $h \in \RR^m$ be given and the matrix $Q$ be such that $Qh = \low Qh$. Suppose that for some $T>0$ and $\varepsilon >0$ we have that $\| h^{E, r}_T \|_c = \| M_J (\low\alpha_r^T)^- \|_c \leqslant \varepsilon$ for every $r \in \NN \cup\{ 0 \}$. Then 
		\begin{equation}
			E_t \leqslant \left(e^{\| \qset \| t}-1\right)\frac{\iota(\qset)}{\| \qset \|}\varepsilon \leqslant 2\left(e^{\| \qset \| t}-1\right)\varepsilon 
		\end{equation}
		 for every $0\leqslant t \leqslant T$. 
	\end{theorem}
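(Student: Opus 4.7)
The plan is to control the error through the differential equation it satisfies, reducing the question to a linear integral inequality that can be resolved by Gronwall's lemma. Set $u_t := \hat h_t - h_t$, so $u_0 = 0$; since $\hat h_t = e^{tQ}h$ solves $\dot{\hat h}_t = Q\hat h_t$ and $h_t$ solves $\dot h_t = \low Q h_t$, integrating gives
\begin{equation*}
u_t = \int_0^t \bigl(Q\hat h_s - \low Q h_s\bigr) \dd s.
\end{equation*}
Taking norms and inserting the identity $Q\hat h_s - \low Q h_s = (Q\hat h_s - \low Q \hat h_s) + (\low Q \hat h_s - \low Q h_s)$ yields
\begin{equation*}
E_t \leqslant \int_0^t \| Q\hat h_s - \low Q \hat h_s \| \dd s + \int_0^t \| \low Q \hat h_s - \low Q h_s \| \dd s.
\end{equation*}

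Next I would bound the two integrands separately. For the first integrand, the hypothesis $\| h_T^{E,r} \|_c \leqslant \varepsilon$ for every $r \in \NN \cup \{0\}$ is precisely the assumption of Theorem~\ref{thm-cone-error-eps}; the last statement of that theorem (applied at $s$ in place of $t$) gives the uniform bound $\| Q[e^{sQ}h] - \low Q[e^{sQ}h] \| \leqslant \iota(\qset)\varepsilon$ for every $s \in [0,T]$. For the second integrand I would invoke the already established Lipschitz property of the lower transition rate operator, namely $\| \low Q f - \low Q f' \| \leqslant \| \qset \| \| f - f' \|$, applied to $f = \hat h_s$ and $f' = h_s$, giving $\| \low Q \hat h_s - \low Q h_s \| \leqslant \| \qset \| E_s$.

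Combining these two bounds collapses the estimate into the integral inequality
\begin{equation*}
E_t \leqslant \iota(\qset)\varepsilon \, t + \| \qset \| \int_0^t E_s \dd s .
\end{equation*}
Gronwall's inequality with the constant driving term $\iota(\qset)\varepsilon \, t$ and constant multiplier $\|\qset\|$ then gives, after a short computation (or equivalently, by multiplying by $e^{-\|\qset\|t}$ and integrating the resulting differential inequality), the closed form
\begin{equation*}
E_t \leqslant \frac{\iota(\qset)}{\| \qset \|}\bigl(e^{\| \qset\| t} - 1\bigr)\varepsilon,
\end{equation*}
and the second inequality in the statement is immediate from Proposition~\ref{prop-bound-iota}, which yields $\iota(\qset) \leqslant 2\|\qset\|$.

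The only delicate point is that $E_t = \|\hat h_t - h_t\|$ is not a priori differentiable, so one cannot naively write down a differential inequality for $E_t$ itself; this is why I would work with the integral form from the start, where only absolute continuity of $u_t$ (which follows from the ODE for $h_t$) is required. The rest is bookkeeping: ensuring that the hypothesis of Theorem~\ref{thm-cone-error-eps} is verified uniformly in $s \in [0,T]$ (which it is, since the bound $\|h_T^{E,r}\|_c \leqslant \varepsilon$ is assumed for all $r$ and the theorem propagates it to every $t \leqslant T$), and verifying that the Gronwall constant works out to the factor $\iota(\qset)/\|\qset\|$ displayed in the statement.
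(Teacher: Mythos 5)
Your proposal is correct and follows essentially the same route as the paper's proof: the same decomposition $Q\hat h_s - \low Q h_s = (Q\hat h_s - \low Q\hat h_s) + (\low Q\hat h_s - \low Q h_s)$, the same appeal to Theorem~\ref{thm-cone-error-eps} for the first term and to the Lipschitz bound $\| \low Q f - \low Q f'\| \leqslant \|\qset\|\,\|f-f'\|$ for the second, leading to the same linear growth inequality with solution $\frac{\iota(\qset)}{\|\qset\|}(e^{\|\qset\|t}-1)\varepsilon$. Your only deviation is replacing the paper's informal differential manipulation of $\dd E_t$ with the integral form and Gronwall's lemma, which is the same argument made slightly more careful about differentiability of $E_t$.
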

	\begin{proof}
		We first make some estimates related to the error $E_t$. Let $\hat h_t = e^{tQ}h$. By Theorem~\ref{thm-cone-error-eps}, we have that $\| Q\hat h_t - \low Q\hat h_t\| \leqslant \iota(\qset) \varepsilon$. 
		
		Next, it follows by the basic properties of vector norms that
		\begin{align*}
			\| \dd\hat h_t - \dd h_t \| & = \| \hat h_{t+\dd t} - \hat h_t - (h_{t+\dd t} - h_t) \| \\
			& \geqslant \| \hat h_{t+\dd t} - h_{t+\dd t} \| - \| \hat h_t - h_t \| \\
			& = E_{t+\dd t} - E_t = dE_t. 
		\end{align*}
		We also have that 
		\begin{align*}
			\| \dd\hat h_t - \dd h_t \| & = \| Q\hat h_t - \low Q h_t \|\dd t \\
			& \leqslant \| Q\hat h_t - \low Q\hat h_t + \low Q\hat h_t - \low Q h_t \|\dd t \\
			& \leqslant \| Q\hat h_t - \low Q\hat h_t \| \dd t + \| \low Q\hat h_t - \low Q h_t \|\dd t \\
			& \leqslant \iota(\qset)\varepsilon \dd t+ \| \qset \| \| \hat h_t- h_t \|\dd t \\
			& = \iota(\qset)\varepsilon\dd t + \| \qset \| E_t\dd t \\
			& \leqslant 2\|\qset\|\varepsilon\dd t + \| \qset \| E_t\dd t
		\end{align*}
		Combining the above inequalities gives
		\begin{equation}\label{eq-de-error}
			\frac{\dd E_t}{\dd t} \leqslant \iota(\qset)\varepsilon + \| \qset \| E_t \leqslant 2\|\qset\|\varepsilon + \| \qset \| E_t.
		\end{equation}
		The maximal error is thus bounded by the solution of the differential equation
		\begin{equation}
			\frac{\dd E_t}{\dd t} = \iota(\qset)\varepsilon + \| \qset \| E_t .
		\end{equation}
		Under the initial condition $E_0=0$, the solution is $E_t =  (e^{\| \qset \| t}-1)\frac{\iota(\qset)}{\| \qset \|}\varepsilon\leqslant 2\left(e^{\| \qset \| t}-1\right)\varepsilon$, and this completes the proof. 
	\end{proof}
	
	\subsection{Upper bound for the error}\label{ss-ube}
	Consider again the operator $e^{tQ}$ acting on vector $h$, which is by definition equal to
	\begin{equation}
		e^{tQ}h = h + \sum_{k=1}^\infty \frac{(tQ)^k}{k!}h =: h + h_t^E.
	\end{equation}
	We now estimate the variational seminorm of $h_t^E$ as a function of $t$, using sublinearity of the seminorm and repeated application of Proposition~\ref{prop-q-matrix-variational}:
	\begin{equation}
		\varepsilon(t) = \| h_t^E \|_c \leqslant \| h \|_c \sum_{k=1}^\infty \frac{(\| tQ\|)^k}{k!} \leqslant \| h \|_c (e^{t\|Q\|}-1), 
	\end{equation}
	which is the worst case estimate for the norm of the component lying outside the normal cone $N_\qset(Q)$. With a small straightforward modification of the differential equation \eqref{eq-de-error}, we obtain 
	\begin{equation}\label{eq-de-error-general}
			\frac{\dd E_t}{\dd t} \leqslant \iota(\qset)\varepsilon(t) + \| \qset \| E_t \leqslant 2\|\qset\|\| h \|_c (e^{t\|Q\|}-1) + \| \qset \| E_t .
	\end{equation}
	The solution of the above differential inequality is bounded from above by the solution of the corresponding differential equation, which is, subject to $E_0=0$,  
	\begin{equation}\label{eq-one-step-maximal-error}
		E_t = 2\| h \|_c (1-e^{t\| \qset \|}(1-t\| \qset\|)).
	\end{equation}
	
	\subsection{Error estimation for the uniform grid}
	The approximation using the uniform grid method on an interval $[0, T]$ is obtained by dividing the interval into subintervals $[t_i, t_{i+1}]$, where $0=t_0 < t_1 < \dots < t_n = T$. Although the differences $t_{i+1}-t_i$ can be variable in some approaches (see e.g. \cite{erreygers2017imprecise}), we will conveniently assume all distances are equal to $\delta = \frac{T}{n}$. By Proposition~\ref{prop-eror-method-general}, the error at time $t_k$ satisfies the following recursive relation
	\begin{equation}
		E_{t_k} \leqslant E_{t_{k-1}} + \metherr_{t_k-t_{k-1}} = E_{t_{k-1}} + \metherr_{\delta}, 
	\end{equation}
	where $\metherr_\delta$ is the error of the one step method, which by equation \eqref{eq-one-step-maximal-error} satisfies $\metherr_\delta \leqslant 2\| h_{t_{k-1}}\|_c(1-e^{\delta\| \qset \|}(1-\delta\| \qset\|))$. Note, however, that $h_t = P_t h$, where $h$ is the initial value and $P_t$ a transition operator, and therefore $\| h_t \|_c \leqslant \| P_t \| \| h\|_c \leqslant \| h \|_c \leqslant \| h \|$, since $\| P_t\| = 1$ is well-known. This is a very conservative estimate and could be improved using ergodicity properties of the operators $P_t$. The total error on the interval $[0, T]$ is bounded by the sum of the errors on the subintervals, which by \eqref{eq-one-step-maximal-error} is equal to
	\begin{equation}\label{eq-uniform-grid-error}
		E_T \leqslant  2n\| h\|(1-e^{\delta\| \qset \|}(1-\delta\| \qset\|)) = 2n\| h\|\left(1-e^{\frac Tn\| \qset \|}\left(1-\frac Tn\| \qset\|\right)\right).
	\end{equation}
	In \cite{erreygers2017imprecise}, an error estimate for a uniform grid method which uses the approximation of $h_{t} = (I+(t_n-t_{n-1})\low Q)h_{t-1}$, has been found to be 
	\begin{equation}
		E^*_T = \delta^2\| \qset \|^2\sum_{i=0}^{n-1}\| h_{t_i} \|_c.
	\end{equation} 
	In the worst case we have that $\| h_{t_i} \|_c = \| h \|$, where we end up with the estimate 
	\begin{equation}
		E^*_T = n\delta^2\| h \|\| \qset \|^2, 
	\end{equation}
	which is very close to our estimate \eqref{eq-uniform-grid-error}, especially for large $n$. 
	
	Both our error estimate and the one found in \cite{erreygers2017imprecise}, benefit from ergodicity properties, causing diminishing the variational norm of the solution vector function. 
	
	\section{Algorithm and examples}\label{s-ae}
	Based on the theoretical results, we now provide an algorithm for estimating the solution of equation \eqref{eq-BSDE} with given imprecise transition rate matrix $\qset$ and initial value $h$. 	
	\subsection{Parts of the algorithm}
	We will present the version of the algorithm where only the matrix exponential method is used. 
	\subsubsection{Inputs} The following inputs to the algorithm are needed:
	\begin{itemize}
		\item a set of gambles $\gambleset$ is given in terms of an $N\times m$ matrix, where the $i$-th row denotes a gamble $f_i$;
		\item a set of lower transition rates $\low Q$ is also represented in terms of a $N\times m$ matrix, where the $(i, j)$-th entry denotes $[\low Q f_i]_j$;
		\item a gamble $h$ as an $m$-tuple;
		\item time interval length $T > 0$;
		\item maximal allowed error $E$.
	\end{itemize} 
	\subsubsection{Outputs} The algorithm provides an approximation of $h_T$ as an $m$-tuple and $Er$, the maximal bound on the error. Note that the calculated approximation can be more accurate than required. The requirement is that $Er \leqslant E$. 
	\subsubsection{Minimizing matrix} The matrix $Q$ satisfying $Qh = \low Qh$ is found using linear programming. For each $k = 1, \ldots, m$, the following linear programming problem is solved:
	\begin{quote}
		Minimize:
		\begin{align}
			Q_k h &  \label{eq-minimizing-nc-objective}
			\intertext{subject to }
			Q_k f_i & \geqslant \low Q_k f_i \label{eq-minimizing-nc-nonnegativity} \\
			Q_k 1_\states & = 0. 
		\end{align} 
	\end{quote}
	The matrix $Q$ consists of the resulting rows $Q_k$. 
	\subsubsection{Identification of the normal cones} For each row $k = 1, \ldots, m$, we identify the index set $I_k = \{ i\in \{ 1, \ldots, N\} \colon Q_kf_i = \low Q f_i \}$. Further, we calculate:
	\begin{itemize}
		\item a non-negative linear combination $\sum_{i\in I_k} \alpha_i f_i = h$ and
		\item if $|I_k|>m-1$, a non-trivial linear combination $\sum_{i\in I_k} \beta_i f_i = 0$.
	\end{itemize}
	Based on the above combinations, a gamble $f_i$ is eliminated as described in Section~\ref{ss-fliplc}. The above steps are repeated until $\gambleset_{I_{k}}$ becomes linearly independent. If needed, the set is completed to a basis with some of the remaining elements of the cone basis. 
	The final output is a linearly independent set $\gambleset_{I_{k}}$ and a collection of coefficients $\low \alpha = (\alpha_0, \ldots, \alpha_{m-1})$ for every row $k$. 
	In the case where some normal cones coincide for different rows, the duplicates are removed. 
	\subsubsection{Finding a feasible interval}\label{sss-ffi} In general, the application of the matrix exponential method on the entire interval $[0, T]$ is infeasible. Hence, we need to find a subinterval $[0, T']$ where the error is within required bounds. As by Proposition~\ref{prop-eror-method-general} the errors are sequentially added to the initial error, we require that the added part of the error $E^m_{T'}$ is smaller than the proportional part of the maximal allowed error: $Er \leqslant ET'/T$. 
	This error estimate is calculated using Theorem~\ref{thm-expm-error}. Its estimation first requires the assessment of $\varepsilon$, which is obtained by applying Theorem~\ref{thm-cone-error-eps}, as $\varepsilon = \min_{J\in I_k}\| M_J(\alpha_s^{T'})^- \|_c$. 
	The initial estimate of the interval length is obtained, using the linear approximation of $e^{tQ}h\approx h + tQh$, to be the maximal $t$ such that $\alpha_0 + tQ_J \alpha_0 \geqslant 0$ (see \eqref{eq-Qj}). If $\alpha_0$ happens to have zero elements, then the above expression may have negative coefficients even for very small values of $t$, in which case we just try with a minimal initial interval, specified as a parameter of the algorithm. 
	
	In case the initial interval yields too large estimated error, the interval is halved until reaching the required error size. Since the estimated error size is at most as large as with the grid methods reported in \cite{erreygers2017imprecise, skulj:15AMC}, the process eventually produces a feasible interval. 
	\subsubsection{Iterative step} Once a feasible interval length $dt$ is found, the new initial solution is set to $h_{dt} = e^{dtQ}h$. The remaining time interval then reduces to $T-dt$. The maximal allowed error is updated to $E-Er$, where $Er$ is the evaluated maximal error of the applied method. 

	Algorithm~\ref{alg-approx} illustrates the main steps of the approximation of the solution using our method. 
	 \begin{algorithm}\label{alg-approx}
		\caption{Function: approximate $h_T$}
		\begin{algorithmic}[1]
			\Require $\gambleset, \low{Q}, h, T, E$
			\Ensure $h_T, maxErr$ \Comment{solution at time $T$, error estimate}
			\State $t_s = 0, t_e = T$ \Comment{start and end time points}
			\State $maxErr = 0$ 
			\State $nq = \| \qset \|, io = \iota(\qset)$
			\While{$t_s < t_e$}
				\State $Q = \arg\min_{Q\in \mathcal Q}Qh$
				\For {$k=1, \ldots, m$}
					
					\State $(I_k) =$ normalCone$(h, Q_k, \qset_k)$
					\State\Comment{find the basis of the normal cone for $k$-th row}
					
					\State $(I^i_k, ind_k) =$ reduceToIndependent$(I_k, h)$
					\State\Comment{reduce to independent set and find linear combination equal $h$}
				\EndFor
				\State $dt =$ min(initialInterval$(I, ind), t_e-t_s)$
				\State \Comment{try initial interval based on the linear approximation} 
				\Repeat 
					\State $\varepsilon=$ estimateEpsilon$(I, ind)$
					\State $Err = (e^{nq\,t}-1)\frac{io}{nq}\varepsilon$ \Comment{estimated error}
					\State $Ea = E\cdot dt/T$ \Comment{maximal allowed error}
					\If {$(Err > Ea)$} \State 	$dt = dt/2$ \EndIf
				\Until {$Err \leqslant  Ea$}
				\State $h = e^{dt\, Q}h$\Comment{new solution} 
				\State $maxErr = maxErr + Err$ \Comment{total error}
				\State $E = E-Err$ \Comment{the remaining allowed error}
				\State $t_s = t_s + dt$ \Comment{new starting point}
			\EndWhile   
			\State \Return $h_T = h, maxErr$
		\end{algorithmic}
	\end{algorithm}
	\subsection{Examples}
	In our first example we demonstrate the use of the method for a case where the solution remains in a single normal cone for the entire interval. 
	\subsubsection{Example 1} 
	Let $\states$ be a set of 3 states, which we denote by $1, 2, 3$. We consider a set $\qset$ of $Q$-matrices which is given by the constraints of the form $\low Q_i (1_A)$ for all non-trivial subsets in $\states$. As in addition we want to ensure that the representing gambles $f$ all satisfy $\sum_{k\in \states} f_k = 0$ and to be of norm equal to 1, we instead use the following six representing gambles
	\begin{align*}
		f_1 & = (-1, 1/2, 1/2) & f_2 & = (1/2, -1, 1/2) & f_3 & = (-1/2, -1/2, 1) \\
		f_4 & = (1/2, 1/2, -1) & f_5 & = (-1/2, 1, -1/2) & f_6 & = (1, -1/2, -1/2).
	\end{align*} 
	Let the set $\qset$ be specified via the following constraints:
	\begin{equation}
		L = \left( 
		\begin{array}{rrrrrr}
			0.76 & -0.69 & 0.15 & -0.24 & 0.60 & -0.92 \\
			-0.99 & 1.21 & 0.30 & -0.39 & -1.37 & 0.90 \\
			-0.24 & -0.54 & -0.76 & 0.61 & 0.45 & 0.15 \\
		\end{array}
		\right).
	\end{equation}
	The elements of the above matrix denote the lower bounds $l_{ki} = \low Q_k(f_i)$. Now $\qset$ is the set of all $Q$-matrices $Q$ satisfying, for every $i=1, \ldots, 6$, $Qf_i \geqslant L^i$, which denotes the $i$-th column of $L$. Given an initial gamble $h = (-0.7, 1.7, -1)$ we calculate the solution of equation \eqref{eq-BSDE} satisfying $h_0 = h$ on the interval $[0, 1]$. 
	
	We try finding as large as possible an interval where $h_t$ is in the same normal cone of $\qset$ as $h$. The matrix $Q$ minimizing $Qh$ over $\qset$ is found to be
	\begin{equation*}
		Q = \left( 
		\begin{array}{rrr}
			\num{-0.56} & \num{0.46} & \num{0.1} \\
			\num{0.606667} & \num{-0.80667} & \num{0.2} \\
			\num{0.146667} &\num{ 0.36} & \num{-0.50667} \\ 
		\end{array}
		\right).
	\end{equation*} 
	All normal cones $N_{\qset_k}(Q_k)$ are spanned by the same set of gambles $\{ f_4, f_5, 1_\states \}$. Specifically, we have that $h = 1.6f_4 + 0.2f_5$. This is of course due to the fact that we restricted the space of the gambles to the set where the sum of components for each one of them is zero. We cannot expect this for all further $h_t$, whence the constant $1_\states$ will in general appear in the linear combinations forming $h_t$.
	
	Thus, we have the initial vector of coefficients $\alpha_0 = (1.6, 0.2, 0)$ of $h$ in the basis $\mathcal B = (f_4, f_5, 1_\states)$. The preliminary analysis based on the first order Taylor approximation as described in Section~\ref{sss-ffi} suggests that the initial time interval where the matrix exponential method could be applied is the interval $[0, T]$ with $T=\num{0.773941371859648}$. To confirm this interval, all vectors $p_r(TQ)h$ must be contained in the cone generate by non-negative linear combinations of $\mathcal B$, except for the constant. According to the procedure described in Section~\ref{ss-camem}, we find the matrix $Q_J$ which corresponds to the operator $Q$ in the basis $\mathcal B$, which we obtain as 
	\begin{equation*}
		Q_J = M_J^{-1}QM_J = \left( 
		\begin{array}{rrr}
			\num{-1.26667} & \num{-0.1} & \num{0} \\
			\num{0.1} & \num{-0.60667} & \num{0} \\
			\num{0.006667} & \num{0.103333} & \num{0} 
		\end{array}
		\right),
	\end{equation*}
	with $M_J$ being the matrix with elements of $\mathcal B$ as columns. Checking whether $p_r(TQ)h$ is contained in the same cone, directly translates to checking whether $\low\alpha^T_n = p_r(TQ_J)\alpha_0$ has non-negative components corresponding to $f_4$ and $f_5$, that is, in the first two places. The resulting sequence of coefficients is (rounded to two decimals): 
	\begin{align*}
		\low \alpha^T_1 & = (0.016, 0.230, 0.024) & \low\alpha^T_2 & = (0.791, 0.162, 0.021) & \low\alpha^T_3 & = (0.540, 0.192, 0.021) \\
		\low\alpha^T_4 & = (0.601, 0.184, 0.021) & \low\alpha^T_5 & = (0.589, 0.186, 0.021) & \low\alpha^T_\infty & = (0.591, 0.185, 0.021).
	\end{align*} 
	All coefficients $\low\alpha^T_n$ for $n > 5$ lie in the neighbourhood of the limit values $\low\alpha^T_\infty$, and are certainly positive. Every partial sum $p_r(1\cdot Q)h$ therefore belongs to the same normal cone as $h$ and so do all $h_t$ for $t\in [0, T]$, as follows by Corollary~\ref{cor-single-cone}. The solution $h_T = e^{T\cdot Q}h = (-0.182, 0.704, -0.460)$ is therefore the exact solution of the equation \eqref{eq-BSDE} on this interval. Two more steps, similar to this one, are needed to obtain the result $h_1 = (\num{-0.107789092019201}, \num{0.552242160179236}, \num{-0.366297008130663})$.
	
	In this example, the power of the new method is fully demonstrated. First, only three optimization steps needed. For comparison we estimate the required number of steps if the uniform grid method \cite{erreygers2017imprecise} were employed. By the error estimate provided in their paper, $\delta^2\| \qset \|^2\sum_{i=0}^{n-1}\| h_{t_i} \|_c = \frac{1}{n^2}\| \qset \|^2\sum_{i=0}^{n-1}\| h_{t_i} \|_c\leqslant \varepsilon = 0.001$ is required. The norms $\| h_{t_i} \|_c$ are bounded from below using the contraction nature of the transition operators, whence we can deduce that $\| h_{t_i} \|_c \geqslant \| h_1 \|_c = 0.45$. The norm $\| \qset \|$ is bounded by $1.82$. Based on these estimates, the number of required iterations would be at least $1\,490$. Applying our method does bring some additional tasks to be performed, yet these tasks in total contribute much less to the time complexity than the optimization steps. 
	
	Second, knowing that the solution lies in the same normal cone, guarantees not only that the result is accurate up to the maximal allowed error, but also that it is the exact solution. Using the approximate operators $(I+ \frac Tn \low Q)^n$, the best we can get are approximations.  
	\subsubsection{Example 2}
		In our second example we revise example in \cite{troffaes2015using}, Section 3.4. In this example the states denote failures in a power network, and the transitions arise from the repair rates. The imprecise transition rate matrix there is given as a pair of a lower and upper transition rate matrices:
		\begin{align}
			Q_L = 
			\begin{bmatrix}
				-0.98 & 0.32 & 0.32 & 0.19 \\
				730 & -1460.61 & 0 & 0.51\\
				730 & 0 & -1460.61 & 0.51\\
				0 & 730 & 730 & -2920
			\end{bmatrix}
			\\
			Q_U = 
			\begin{bmatrix}
				-0.83 & 0.37 & 0.37 & 0.24 \\
				1460 & -730.51 & 0 & 0.61 \\
				1460 & 0 & -730.51 & 0.61 \\
				0 & 1460 & 1460 & -1460
			\end{bmatrix}, 
		\end{align}
		where we can simply take
		\begin{multline}
			\mathcal{Q} = \left[Q_L, Q_U\right]
			=\Big\{Q\colon Q_{L, k} \leqslant Q_k \leqslant Q_{U, k}, 
			\forall 1\leqslant k\leqslant m,\,\sum_{l=1}^mQ_{kl}=0\Big\}
		\end{multline}
		In the original paper, bounds for the long-term distribution were estimated, yet without a clear idea how to estimate the error bounds. 
		
		It was observed, however, that the uniform grid with as little as 80 subintervals was sufficient to obtain a sufficiently accurate result on the interval $[0, 0.02]$, which turned to be sufficient for the process to reach the limit distribution. The error estimates employing the methods at hand predicted significantly larger errors than observed. 
		
		The bounds for the limit distributions were found to be
		\begin{equation}
			\underline\pi=
			\begin{bmatrix}
				\num{9.9849486e-01} \\
				\num{2.6229302e-04} \\
				\num{2.6229302e-04} \\
				\num{6.5126517e-05}
			\end{bmatrix}
			\quad
			\overline\pi=
			\begin{bmatrix}
				\num{9.9936674e-01} \\
				\num{7.252061e-04} \\
				\num{7.252061e-04} \\
				\num{1.6469619e-04}
			\end{bmatrix}
		\end{equation}
		To calculate the lower transition probability $\lpr_t(\{ i | j\})$ we first find the solution $h_t$ of \eqref{eq-BSDE} for $h_0 = 1_{\{ i \}}$ and take its $j$-th component $[h_t]_j$. To calculate the upper probability, we take $h_0 = -1_{\{ i \}}$ and then set $\up P_t(\{ i | j\})=-[h_t]_j$. For a sufficiently large time interval and a convergent chain, all components of $h_t$ became more and more similar and in our case they denote the limit lower respectively upper probabilities. 
		
		We repeated the calculations utilizing our method, setting the maximal allowed error to $0.001$ and the time interval to $[0, 1]$, that is clearly more than sufficient to ensure convergence. The method produced identical results on the lower and upper bounds, with the number of required iterations for each value varying between 30 and 40. Our method therefore confirms the validity of the results in the original paper, which does not contain a rigorous proof. 
		
		\section{Concluding remarks}
		The method presented in this paper provides a promising alternative to the existing methods  for approximating the solutions of the imprecise generalization of the Kolmogorov backward differential equation on finite intervals. The primary achievement is that the approach of matrix exponentials no longer needs to be combined with the grid methods. This is predominantly thanks to the introduction of the approximate version of the exponential method and considerably improved error estimation.

		As presented, our analysis is limited to finite intervals; however, with some adaptations, it could be employed for finding the limit distributions as well. A step into this direction is demonstrated in our second example, where the obtained solution is effectively the limit distribution. The convergence manifests in the solutions becoming close to a constant vector. Put differently, the difference to a constant tends to zero, which is taken into account by the error estimates. It is a matter of further work to formalize this into a comprehensive method for finding long term distributions.

	\section*{Acknowledgement}
	The author acknowledges the financial support from the Slovenian Research Agency (research core funding No. P5-0168).
	
	 \bibliographystyle{splncs04}
	\bibliography{references_all}
\end{document}